\crefname{hypothesis}{Hypothesis}{Hypotheses}
\title{On the exponential  stability of a stratified flow to the 2D IDEAL MHD equations with  damping\thanks{Submitted to the editors DATE.}}
\author{Yi Du\thanks{Department of Mathematical Sciences, Jinan University, Guangzhou 510632, China. 
  (\email{duyidy@gmail.com}, \email{duyidy@jnu.edu.cn} ).}
\and Wang Yang\thanks{Department of Mathematical Sciences, Jinan University, Guangzhou 510632, China. 
  (\email{yang@stu2017.jnu.edu.cn}).}
\and Yi Zhou\footnotemark[3]\thanks{School of Mathematical Sciences, Fudan University, Shanghai 200433, China. Department of Mathematical Sciences, Jinan University, Guangzhou 510632, China. 
  (\email{yizhou@fudan.edu.cn} )}
}
\begin{document}

\maketitle

\begin{abstract}
  We study the stability of a type of stratified flows of the two dimensional inviscid incompressible MHD equations with velocity damping. The exponential stability for the perturbation near certain stratified flow is investigated in  a strip-type area $\mathbb{R}\times[0,1]$. Although  the magnetic filed potential is governed by a transport equation, by using the algebraic structure of the incompressible condition,  it turns out that the linearized MHD equations around the given stratified flow retain a non-local damping mechanism. After carefully analyzing the non-linear structure and introducing some suitable weighted energy norms, we get the exponential stability by combining the exponential decay in time in the lower order energy with that in the  high order energy.
\end{abstract}

\begin{keywords}
 Exponential  stability, Stratified flow,  2D ideal MHD  equations with damping. 
\end{keywords}

\begin{AMS}
  35Q35, 35L03.
\end{AMS}

\section{Introduction}
During the past decades, incompressible fluid equations have received much attention, because they are with mathematical challenge, and present many interesting phenomena, particularly the phenomena of the stability or instability related to the  longtime
behavior of solutions \cite{drazin}.
In recent years, researchers have discovered numerous new interesting phenomena, such as the stability  for the  planar Couette-flow
 in 2D Euler equations \cite{Masmoudi2013-2} and for  the incompressible
Navier-Stokes equations ($n=2,3$) with high Reynold's number \cite{Masmoudi2013-1}. To the
half space case in $\mathbb{R}^2$,  with  the assumption that the initial perturbation is small and periodic,  the system is  stable
even if the basic flow is large \cite{kozono,Romanov}. The stability for the compressible case with small Couette-type flow  is discussed in \cite{Kagei}. Moreover, as to different Mach numbers,  the stability  or instability for a small steady Poiseuille-type flow in a layer area in $\mathbb{R}^2$ have been discussed in \cite{Kagei1} and  \cite{Kagei2} respectively.
For more relevant results on this topic, readers can  see  \cite{giga-liu,jiang fei1,jiang fei3,guo2,guo3,guo4} and the references therein.

Magneto-Hydrodynamics (MHD) equations describe the motion of an electrically conducting fluid in the presence of the magnetic field, underlying many physical phenomena  such as the geomagnetic dynamo in geophysics and solar winds and
solar flares in astrophysics (\cite{davidson}). Mathematically, the  MHD equations  are extremely difficult to analyze due to the analogous  nonlinear structure and the strong nonlinear coupling with the incompressible Navier-Stokes equations.

In this paper, we shall study the exponential stability for a type of stratified  flow to the following 2D  MHD equations with damping,
 namely,
\begin{equation}\label{1.1}
\begin{cases}
\partial_t U +(U\cdot \nabla)U+\kappa U+\nabla P=(B\cdot\nabla)B,\quad (x,y,t)\in \Omega \times \mathbb{R}^+,\\
 \partial_t B +(U\cdot \nabla)B=(B\cdot\nabla)U,\\
 \nabla\cdot U=\nabla \cdot B=0,
\end{cases}
\end{equation}
where $U=(U_1,U_2)^T$, $B=(B_1,B_2)^T$ and the scalar  $P$ are the velocity,  magnetic field and the pressure of the fluid respectively. $\kappa$ is  a positive  constant. Here and hereafter, we denote $\nabla=(\partial_x,\partial_y)^T$
and the  layer area
\begin{equation}\label{1.2}
\Omega=\mathbb{R}\times [0,1].
\end{equation}

Noting the condition $\nabla\cdot B=0$ and in 2D, we take
\begin{equation}\label{1.3}
B=\nabla^\bot\Phi\triangleq(\partial_y\Phi,-\partial_x\Phi),
\end{equation}
with $\Phi$ as a scalar function. The system \eqref{1.1} is equivalent to
\begin{equation}\label{1.4}
\begin{cases}
\partial_t \Phi +(U\cdot \nabla)\Phi=0,\\
\partial_t U +(U\cdot \nabla)U+\kappa U+\nabla P=\nabla\cdot (\nabla^\bot \Phi\otimes \nabla^\bot \Phi),\quad (x,t)\in \Omega\times \mathbb{R}^+,\\
 \nabla\cdot U=0.
\end{cases}
\end{equation}

For completeness, we recall some related efforts  on the global regularity problem for the 2D MHD system first. The dissipative and resistive MHD system has been well studied \cite{Temam}.
The dissipative and non-resistive MHD system  has been first  studied by  Lin, Xu and Zhang \cite{zhang-lin-xu},
then  the proof of \cite{zhang-lin-xu} has been significantly simplified by Zhang \cite{zhang ting2}  later. The inviscid and resistive case  or mixed partial dissipation and partial magnetic diffusion 2D MHD system can be found in the series work of Cao and Wu, such as \cite{cao-wu1} and the references therein.
Considering the ideal MHD system (i.e. inviscid and non-resistive), it will bring extra difficulty to bound the vortex stretching type terms or the nonlinear coupled terms  even in 2D. Using  the Elasser's variable $Z^\pm= U\pm B$,
Bardos, Sulem and Sulem \cite{bardos} prove the global existence of the classical solution when the initial data is close
to the non-zero constant equilibrium state $(\bar{B}, 0)$. Interested readers may see more results in \cite{zhouyi1, wu-wu-xu,hu-lin,xiang-wu-zhang,zhang ting1,Yamazaki1,zhou-fan,zhouyi2} et. al.

 In this paper, we consider exponential stability for the  MHD system \eqref{1.4} with the  stratified flow of the form $\bar{U}(t,x,y)=(\Psi(t, y),0)$ and the magnetic potential $\bar{\bar{\Phi}}(t,x,y)=\bar{\Phi}(t,y)+C_0x$. Here $(x,y)\in\Omega$ and $C_0$ is a non-zero constant. It turns out that  functions $(\Psi(t,y),\bar{\Phi}(t,y))$  solve a 1D wave equation on the bounded interval $y\in[0,1]$, which is similar  to the case of visco-elasticity system given by Endo, Giga, Gotz and Liu \cite{giga-liu}.
We prove that if initially the  equilibrium   stratified  flow is sufficiently smooth, then the pair of solutions $(\bar{U},\bar{\bar{\Phi}})$ is exponential  stable as time tends to infinity.

 Denote
\begin{equation}\label{1.5}
u=U-\bar{U}, \quad \rho=\Phi-\bar{\bar{\Phi}},
\end{equation}
then the perturbed system \eqref{1.4} reads as
 \begin{equation}\label{1.6}
\partial_{t}\rho+u_{2}\partial_{y}\Bar{\Phi}+C_0u_{1}+(u\cdot\nabla)\rho+\Psi\partial_{x}\rho=0,
\end{equation}
and
\begin{multline}\label{1.7}
\partial_{t}u+(u\cdot\nabla)u+(u_{2}\partial_{y}\Psi,0)^T+\kappa u+\Psi\partial_{x}u+\nabla P\\
=(\nabla^{\bot}\rho\cdot\nabla)\nabla^{\bot}\rho-(\partial_{x}\rho\partial_{y}^{2}\Bar{\Phi},0)^T
+(\partial_{y}\Bar{\Phi}\partial_{xy}\rho,-\partial_{y}\Bar{\Phi}\partial_{x}^{2}\rho)^T-C_0(\partial_{y}^{2}\rho,-\partial_{xy}\rho)^T.
\end{multline}
Here and hereafter, $A^T$ means the transpose of the vector $A$.

We take the rigid boundary conditions for system \eqref{1.6}-\eqref{1.7} as
\begin{equation}\label{1.8}
\begin{cases}
\lim\limits_{|x|\rightarrow +\infty } u(x,y, t)=0, \quad u_2(x,y,t)|_{y=0,1}=0,\\
(\partial_yu_1-\partial_x u_2)(x,y,t)|_{y=0,1}=0, \quad \partial_y \Psi(t,y)|_{y=0,1}=0.
\end{cases}
\end{equation}

Now,  our main theorem states as follows:
\begin{theorem}
Let $\epsilon_0>0$ be a small constant and $k\geq 7$. For the system \eqref{1.5}-\eqref{1.8},   taking the initial equilibrium $(\Psi,\partial_t\Psi, \bar{\Phi})|_{t=0}= (\Psi_0(y), \Psi_1(y),\bar{\Phi}_0(y))\in H^{k+3}$, and the initial  perturbation as $u_0=(u_{10}, u_{20})$ and $\rho_0$,   if the following conditions  hold:
\begin{equation}\label{1.9}
\begin{cases}
\partial_y^{2n-1}\rho_0(x,y)|_{y=0,1}=\partial_y^{2n-1}\Psi_0(x,y)|_{y=0,1}=\partial_y^{2n-1}\bar{\Phi}_0(x,y)|_{y=0,1}=0,\\
\int_0^1 u_{10}(x,y ) dy=\int_0^1 \rho_0(x,y ) dy=0,
\end{cases}
\end{equation}
and
\begin{equation}\label{1.10}
\|u_0\|_{H^k(\Omega)}+\|\rho_0\|_{H^{k+1}(\Omega)} \lesssim \epsilon_0,
\end{equation}
then the equilibrium flow $(\bar{U},\bar{\bar{\Phi}})$ of the system \eqref{1.6}-\eqref{1.8} is  "exponentially stable"
\footnote{
Considering the abstract differential system $\dot{X}=g(X,t)$, there is a norm $\|\cdot\|_{X}$ and the solution $X(t)$ is obtained. Meanwhile, if there exists a positive constant  $a$,  satisfying $\|X(t)\|_{X}\lesssim \|X(t_0)\|_{X}e^{-a(t-t_0)}$ for $t\geq t_0$, we call that the solution $X(t)$ is exponentially stable in the sense of $\|\cdot\|_{X}$. }
 in the space of $H^k(\Omega)$.
\end{theorem}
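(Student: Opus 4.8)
The plan is to prove the theorem by a continuity (bootstrap) argument on a weighted energy, exploiting two sources of dissipation: the explicit velocity damping $\kappa u$, and a hidden \emph{non-local} damping for the magnetic-potential perturbation $\rho$ that is produced by combining the transport equation \eqref{1.6} with the divergence-free condition and the rigid boundary conditions \eqref{1.8}. Local existence and uniqueness for \eqref{1.6}--\eqref{1.8} in $C([0,T];H^k\times H^{k+1})$ is standard --- the system is a transport equation coupled to a damped Euler-type equation with an elliptic pressure --- and the hypothesis $k\ge7$ is simply large enough that all the two-dimensional Sobolev products and embeddings below are legitimate; so the whole task is to propagate, uniformly in $T$, a suitable small energy and to upgrade it to exponential decay.

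First I would record the decay of the background. Substituting the stratified ansatz into \eqref{1.4} gives $\partial_t\Psi+\kappa\Psi=-C_0\,\partial_y^2\bar\Phi$ and $\partial_t\bar\Phi=-C_0\Psi$, so that $\Psi$ solves the damped wave equation $\partial_t^2\Psi+\kappa\,\partial_t\Psi-C_0^2\,\partial_y^2\Psi=0$ on $[0,1]$ with $\partial_y\Psi|_{y=0,1}=0$. Projecting off the $y$-average and iterating the classical damped-wave energy estimate in the number of $y$-derivatives --- the odd normal traces in \eqref{1.9} annihilating every boundary term --- I expect $\|\partial_t\Psi(t)\|_{H^{k+2}}+\|\partial_y\Psi(t)\|_{H^{k+2}}+\sum_{j\ge1}\|\partial_y^{\,j}\bar\Phi(t)\|_{H^{k+3-j}}\lesssim\epsilon_0\,e^{-\beta t}$ for some $\beta>0$. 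Only derivatives of the background appear as genuine coefficients in \eqref{1.6}--\eqref{1.7} (the bare transport term $\Psi\,\partial_x(\cdot)$ is skew-adjoint in every energy estimate since $\partial_x\Psi=0$), so this is all the background information the perturbation analysis needs.

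The crux is uncovering the damping for $\rho$. Differentiating \eqref{1.6} in $t$, substituting the first component of \eqref{1.7} for $\partial_t u_1$, and using $\partial_x u_1=-\partial_y u_2$ together with $u_2|_{y=0,1}=0$, one is led to a damped-wave identity for $\rho$ of the schematic form
\begin{equation*}
\partial_t^2\rho+\kappa\,\partial_t\rho-C_0^2\,\partial_y^2\rho=C_0\,\partial_x P+\mathcal N,
\end{equation*}
where $\mathcal N$ gathers the terms that are quadratic in $(u,\rho)$ or carry a decaying background factor, and the pressure $P$ solves $\Delta P=\mathrm{div}(\cdots)$ in $\Omega$ with Neumann data that, again by \eqref{1.9}, vanishes on $y=0,1$. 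The damping $\kappa\,\partial_t\rho$ is inherited from the velocity damping through the coupling constant $C_0\neq0$, while the term $C_0\,\partial_x P$ --- governed by $\Delta^{-1}$ --- is precisely the non-local ingredient. For the nonlinear problem I would realise this through a corrected energy: to the natural magnetic energy $\tfrac12\|u\|^2+\tfrac12\|\nabla^\bot\rho\|^2$ (and its $\partial^\alpha$-analogues, $|\alpha|\le k$) I add a small cross term of the form $\delta\!\int u\cdot\nabla^\bot\rho$, whose time derivative supplies the missing coercive piece $-\delta C_0^2\|\partial_y\rho\|^2$ and, through the Poincar\'e inequality in $y$ (legitimate because $\int_0^1 u_1\,dy=\int_0^1\rho\,dy=0$ are propagated by \eqref{1.6}--\eqref{1.8}), a full $-c\,\|\rho\|_{H^{k+1}}^2$ in the dissipation. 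The delicate point is that \eqref{1.7} is second order in $\rho$ whereas $\rho$ lives only in $H^{k+1}$: the apparent loss of one derivative disappears precisely because the magnetic tension $-C_0^2\partial_y^2\rho$ is paired against $\partial_t^2\rho$ in the damped-wave energy, which forces $\partial_y\rho$ to sit one derivative above $u$ and dictates the anisotropic weights in the energy.

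With the resulting weighted energies $\mathcal E_m$ (lower order, $m\approx k-3$) and $\mathcal E_k$ (top order) and their dissipations $\mathcal D_m\gtrsim c\,\mathcal E_m$, $\mathcal D_k\gtrsim c\,\mathcal E_k$, I would first close the low-order estimate: all commutator and nonlinear contributions are bounded by $\lesssim(\sqrt{\mathcal E_k}+e^{-\beta t})\,\mathcal D_m$, giving $\tfrac{d}{dt}\mathcal E_m+c\,\mathcal E_m\le C(\sqrt{\mathcal E_k}+e^{-\beta t})\,\mathcal E_m+C\epsilon_0^2 e^{-\beta t}$, hence $\mathcal E_m(t)\lesssim\epsilon_0^2 e^{-\beta' t}$ under the bootstrap smallness of $\mathcal E_k$. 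In the top-order estimate the only dangerous terms are those in which every derivative falls on a low-regularity factor --- a background coefficient (exponentially small by the first step) or the other unknown at top order --- so they are absorbed via $\lesssim(\sqrt{\mathcal E_m}+e^{-\beta t})\,\mathcal D_k+\sqrt{\mathcal E_k}\,\mathcal D_k$, which gives $\tfrac{d}{dt}\mathcal E_k+c\,\mathcal E_k\le C(\epsilon_0+e^{-\beta' t})\,\mathcal E_k+C\epsilon_0^2 e^{-\beta' t}$ and therefore $\mathcal E_k(t)\lesssim\epsilon_0^2 e^{-\gamma t}$ once $\epsilon_0$ is small. Since $\mathcal E_k(t)\simeq\|u(t)\|_{H^k}^2+\|\rho(t)\|_{H^{k+1}}^2+(\text{background})$, this is the asserted exponential stability in $H^k(\Omega)$, and the uniform bound $\mathcal E_k\lesssim\epsilon_0^2$ closes the continuity argument, making the solution global. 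The principal obstacle throughout is the third step: producing a genuinely coercive non-local dissipation for the transported quantity $\rho$ while simultaneously disposing of the pressure through the elliptic problem on the strip, avoiding the loss of one derivative via the damped-wave/null structure, and keeping every $y$-integration-by-parts boundary term under control through \eqref{1.9}; tuning the weights in $\mathcal E_m$ and $\mathcal E_k$ so that this cancellation, the Poincar\'e inequalities, and the background decay are all respected at once is where essentially all the technical work will go.
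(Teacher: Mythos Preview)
Your overall plan---background decay, a damped-wave mechanism for $\rho$, bootstrap---matches the paper's, but the hypocoercivity step has a genuine gap and the paper's execution is quite different. The cross term $\delta\int u\cdot\nabla^\bot\rho$ does not produce the claimed dissipation: at the linearised level its time derivative collapses to boundary terms (compute $\langle\partial_y^2\rho,\partial_y\rho\rangle$, $\langle\partial_{xy}\rho,\partial_x\rho\rangle$, $\langle u_1,\partial_yu_1\rangle$, $\langle u_2,\partial_xu_1\rangle$). The natural damped-wave corrector is $\delta\langle\partial_t\rho,\rho\rangle\sim-\delta C_0\langle u_1,\rho\rangle$, and that does give $-\delta C_0^2\|\partial_y\rho\|_{H^k}^2$; but this is still not coercive on $\|\nabla^\bot\rho\|_{H^k}^2$, because the latter contains $\|\partial_x^{k+1}\rho\|^2$, and Poincar\'e in $y$ only bounds this by $\|\partial_y\partial_x^{k+1}\rho\|$, one derivative beyond $\|\partial_y\rho\|_{H^k}$. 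So $\tfrac{d}{dt}\mathcal E_k+c\,\mathcal E_k\le\cdots$ fails as stated, and the top-order exponential decay you announce does not follow.

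The paper avoids both problems by a different architecture. The decay is obtained (Section~4) by explicitly diagonalising the linear semigroup in Fourier variables $(\xi,q)$: the exponential rate is uniform in $\xi$, so every $\partial_x$-derivative inherits it, and Duhamel then gives $E_4(t)\lesssim e^{-\beta t}$ under the bootstrap (Lemma~5.1). At top order the paper does \emph{not} seek decay---only a uniform bound on $E_k^2+\Gamma_{k+1}^2$ via Gronwall, using $\int_0^\infty(E_4+\|\bar\Phi\|_{H^{k+3}}+\|\Psi\|_{H^{k+3}})\,dt<\infty$. Because the Duhamel argument needs $\partial_tu,\partial_t\rho$ in the energy, a real loss of one derivative appears in the $H^k$ estimate (terms such as $\langle\partial^k\partial_tu,(\partial_tu\cdot\nabla)\partial^ku\rangle$ in (3.20), (3.27), (3.35)); this is absorbed not by null structure but by the auxiliary \emph{weighted} norm $\Gamma_{k+1}^2=\tfrac12\int(1+\partial_x\rho)\big(|\nabla^{k+1}u|^2+|\nabla^{k+1}\nabla^\bot\rho|^2\big)$, the Castro--C\'ordoba--Lear device, which is the key technical ingredient you are missing.
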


 One main difficulty in studying the system \eqref{1.6}-\eqref{1.8} is in the variable $\rho$, which is governed by a transport equation.  It is a great challenging problem to establish the decay estimate to get our main result.  Besides, due to the coupling terms there is a loss of derivative.
 Recently, in \cite{cardoba2}, authors study the stability property for the  2D incompressible  Boussinesq equations with damping, where they introduce an interesting weighted estimate to handle the loss of derivative's problem.
We can borrow the ideas from \cite{cardoba2} to study our case.

Let us point out some technical difficulties in our proof. First, we consider a strongly nonlinear coupled system, which is much more complicated  than the linear coupled system \cite{cardoba2}. Besides,  since we work in a sobolev space frame with high order derivatives, it needs a careful analysis of boundary  conditions (see Section 2 for details).
To get decay estimates for the perturbation of magnetic potential $\rho$, it needs a suitable background magnetic field, and this thought is inspired by the heuristic work of \cite{bardos}. Actually, the linearized system turns out to be  a 1D wave equation with damping on the bounded interval $[0,1]$, then we can get the exponential decay when we eliminate the zero mode of $u_1$ and $\rho$. Here we do not need the smallness for the  equilibrium flow.  More precisely, due to the damped wave equations  on a 1D bounded interval, the solution  possesses an exponential decay. With this decay rate, we can get our desired results without the smallness assumption of the background.

The strategy to prove our results is in the following two folds. First,  the basic observation is the stratified  flow actually plays a role of stability. Based on this fact, we exploit the structure of the system, and get  good enough decay rates for both the perturbation and the basic flow.   Second, we use the weighted energy estimates to overcome the loss of derivatives.  In our proof, we use some ideas from \cite{giga-liu,wu-wu-xu,cardoba2}.

This paper is organized as follows: in Section 2, we shall present some preliminaries and give the estimates for the basic flow. Moreover, the boundary  conditions and our function space will also be presented.
In Section 3, the energy estimates will be given. In Section 4, we shall give the decay estimates, and then the main theorem will be proved in  Section 5.
Throughout the paper, we sometimes use the notation $A\lesssim B$ as an equivalent notation to $A \leq CB$ with an uniform constant $C$.

\section{Preliminaries}
We begin with a  well-known Sobolev inequality, see e.g. \cite{adams}.
\begin{lemma}
Let $\Omega_0\subset \mathbb{R}^n$ and $s\in \mathbb{N}$,  for $(f,g)\in H^s(\Omega_0)$  the following estimate holds:
\begin{equation}\label{2.1}
\|fg\|_{H^s}\lesssim \|f\|_{H^s}\|g\|_{L^\infty}+\|g\|_{H^s}\|f\|_{L^\infty}.
\end{equation}
\end{lemma}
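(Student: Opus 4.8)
The plan is to prove \eqref{2.1} by the classical Moser-type argument: reduce to smooth functions on the whole space, expand the derivatives of the product with the Leibniz rule, and bound each resulting term using a Gagliardo--Nirenberg interpolation inequality together with Hölder's inequality and a weighted arithmetic--geometric mean estimate.

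\emph{Reduction.} Since the domains of interest (in particular the strip $\Omega=\mathbb{R}\times[0,1]$ relevant to this paper) are extension domains, there is a bounded linear operator $E:H^s(\Omega_0)\to H^s(\mathbb{R}^n)$ that is simultaneously bounded from $L^\infty(\Omega_0)$ to $L^\infty(\mathbb{R}^n)$; for the strip one may just take successive even reflections across $y=0$ and $y=1$. Because $(Ef)(Eg)=fg$ on $\Omega_0$, it is enough to prove \eqref{2.1} with $\Omega_0$ replaced by $\mathbb{R}^n$, and then by density we may assume $f,g\in C_c^\infty(\mathbb{R}^n)$ with all norms below finite.

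\emph{Main estimate.} Fix a multi-index $\alpha$ with $\ell:=|\alpha|\le s$. By the Leibniz rule $D^\alpha(fg)=\sum_{\beta\le\alpha}\binom{\alpha}{\beta}\,D^\beta f\,D^{\alpha-\beta}g$, so it suffices to control $\|D^\beta f\,D^{\alpha-\beta}g\|_{L^2}$ for each $\beta\le\alpha$ with $|\beta|=j\in\{0,\dots,\ell\}$. When $j\in\{0,\ell\}$ this is immediate from Hölder, giving $\|f\|_{L^\infty}\|D^\alpha g\|_{L^2}$ or $\|g\|_{L^\infty}\|D^\alpha f\|_{L^2}$. For $1\le j\le\ell-1$ I would use Hölder with exponents $p=2\ell/j$ and $q=2\ell/(\ell-j)$, which satisfy $1/p+1/q=1/2$, together with the Gagliardo--Nirenberg inequalities $\|D^j f\|_{L^{p}}\lesssim\|f\|_{L^\infty}^{1-j/\ell}\|D^\ell f\|_{L^2}^{j/\ell}$ and $\|D^{\ell-j}g\|_{L^{q}}\lesssim\|g\|_{L^\infty}^{j/\ell}\|D^\ell g\|_{L^2}^{1-j/\ell}$. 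Since $\|D^\ell f\|_{L^2}\le\|f\|_{H^s}$ and $\|D^\ell g\|_{L^2}\le\|g\|_{H^s}$ for $\ell\le s$, writing $\theta=j/\ell\in(0,1)$ we obtain $\|D^\beta f\,D^{\alpha-\beta}g\|_{L^2}\lesssim \bigl(\|f\|_{H^s}\|g\|_{L^\infty}\bigr)^{\theta}\bigl(\|f\|_{L^\infty}\|g\|_{H^s}\bigr)^{1-\theta}$, and Young's inequality $X^\theta Y^{1-\theta}\le\theta X+(1-\theta)Y\le X+Y$ bounds the right-hand side by $\|f\|_{H^s}\|g\|_{L^\infty}+\|f\|_{L^\infty}\|g\|_{H^s}$. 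Summing the finitely many terms over $\beta\le\alpha$ and over $|\alpha|\le s$ then yields \eqref{2.1}.

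I expect no essential obstacle, since this is a standard inequality; the only points requiring some care are the justification of the Gagliardo--Nirenberg inequality in the stated homogeneous form (classical on $\mathbb{R}^n$), the extension step onto the bounded domain, and the separate treatment of the endpoint indices $j=0,\ell$ so that the Hölder exponents $2\ell/j$ and $2\ell/(\ell-j)$ remain admissible. Everything else is routine bookkeeping.
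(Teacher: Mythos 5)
Your proposal is correct, but note that the paper does not actually prove this lemma at all: it is stated as a well-known Sobolev (Moser-type) product estimate with a citation to Adams' book, so there is no in-paper argument to compare against. What you supply is the standard proof — extension to $\mathbb{R}^n$, Leibniz expansion, Gagliardo--Nirenberg interpolation $\|D^j f\|_{L^{2\ell/j}}\lesssim\|f\|_{L^\infty}^{1-j/\ell}\|D^\ell f\|_{L^2}^{j/\ell}$ paired with the conjugate exponent for $g$, and Young's inequality — and the bookkeeping (endpoint indices $j=0,\ell$ treated separately, $\theta=j/\ell$) is all in order. Two small points of care. First, the lemma as stated in the paper imposes no regularity on $\Omega_0$, and some hypothesis (extension domain) is genuinely needed for your reduction step; you correctly restrict attention to the strip, where it holds. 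Second, for $s\ge 2$ the naive even reflection $\tilde f(x,y)=f(x,|y|)$ across $y=0$ is only $H^1$ in general, since $\partial_y\tilde f$ jumps at $y=0$ unless $\partial_y f$ vanishes there; you should instead invoke the higher-order Hestenes/Babich reflection (a finite linear combination of $f(x,-y),f(x,-y/2),\dots$ with coefficients matching derivatives up to order $s$) or Stein's extension, both of which are simultaneously bounded on $H^s$ and $L^\infty$, so the reduction survives with that replacement. With that adjustment the proof is complete and is the argument the cited reference has in mind.
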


\subsection{Estimates for the  basic flow}
Since we take the basic flows as $\bar{U}=(\Psi(t,y),0) $ and  $\bar{\bar{\Phi}}(x,y,t)=\bar{\Phi} (t,y)+C_0x$, then we have
\begin{equation}\label{2.2}
\begin{cases}
\partial_t\bar{U}+(\bar{U}\cdot\nabla)\bar{U}+\kappa\bar{U}+\nabla{\bar{P}}=\nabla\cdot{(\nabla^{\bot}\bar{\bar{\Phi}}\otimes\nabla^{\bot}\bar{\bar{\Phi}})},\\
\partial_t\bar{\bar{\Phi}}+(\bar{U}\cdot\nabla)\bar{\bar{\Phi}}=0.
\end{cases}
\end{equation}
Moreover, the system \eqref{2.2} can be simplified  as
\begin{equation}\label{2.3}
\begin{cases}
\partial_t\Psi+\kappa\Psi+\partial_{x}\bar{P}=-C_{0}\partial_{y}^{2}\bar{\Phi},\quad y\in[0,1],\\
\partial_{y}\bar{P}=0,\\
\partial_t\bar{\Phi}+C_{0}\Psi=0.
\end{cases}
\end{equation}

From $\eqref{2.3}_1$ and $\eqref{2.3}_2$ , we conclude that $\partial_{x}\bar{P}$ only depends on the variable $t$. We write $\partial_{x}\bar{P}=h(t)$ as in \cite{giga-liu} for a well picked function $h(t)$, which  does not play essential role in our proof. For simplicity, we take $\partial_{x}\bar{P}=h(t)=0$. And then from $\eqref{2.3}$, we have
\begin{equation}\label{2.4}
\begin{cases}
\partial_{t}^{2}\bar{\Phi}+\kappa\partial_{t}\bar{\Phi}-C_{0}^2\partial_{y}^{2}\bar{\Phi}=0,\\
\Psi=-\frac{1}{C_{0}}\partial_{t}\bar{\Phi}.
\end{cases}
\end{equation}

By using  the boundary condition \eqref{1.8},
we have $\partial_y\Psi(t,y)|_{y=0,1}=\partial_{ty}\Psi(t,y)|_{y=0,1}=0$, which  implies $\partial_{y}\Bar{\Phi}(t,y)|_{y=0,1}=0.$  From \eqref{2.4}, we have:
\begin{equation}\label{2.5}
\partial_{t}{\partial_{y}\Psi}+k\partial_{y}\Psi=-C_0\partial_{y}^{3}\Bar{\Phi},
\end{equation}
then one has
\begin{equation}\label{2.6}
\partial_{y}^{3}\Bar{\Phi}(t,y)|_{y=0,1}=0.
\end{equation}

Repeating this  process,  we get the following compatible conditions
\begin{equation}\label{2.7}
 \partial_{y}^{2n-1}\Psi(t,y)|_{y=0,1}=0, \quad \partial_{y}^{2n-1}\Bar{\Phi}(t,y)|_{y=0,1}=0,\quad n\in\mathbb{N}.
 \end{equation}

\begin{proposition}
For $2\leq k\in \mathbb{N}$, and $(\bar{\Phi}(t,y),\Psi(t,y))$ is a pair of solutions to the system \eqref{2.4} and \eqref{2.7} with the initial data $(\bar{\Phi}_0(y),\Psi_0(y),\Psi_1(y))\in H^k$, then there exists a uniform constant $\alpha>0$, such that,
\begin{equation*}
\|\bar{\Phi}(t,\cdot)\|_{H^k([0,1])}
+
\|\partial_t\bar{\Phi}(t,\cdot)\|_{H^{k-1}([0,1])} \lesssim (\|\bar{\Phi}_0\|_{H^k}^2+ \|\Psi_0\|_{H^k}^2+ \|\Psi_1\|_{H^{k-1}}^2)e^{-\alpha t},
\end{equation*}
and
\begin{equation*}
\|\Psi(t,\cdot)\|_{H^{k-1}([0,1])}
+
\|\partial_t\Psi(t,\cdot)\|_{H^{k-2}([0,1])} \lesssim (\|\bar{\Phi}_0\|_{H^k}^2+ \|\Psi_0\|_{H^k}^2+ \|\Psi_1\|_{H^{k-1}}^2)e^{-\alpha t}.
\end{equation*}

\end{proposition}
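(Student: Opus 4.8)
The plan is to view \eqref{2.4} as a damped wave equation on the bounded interval $[0,1]$ and to run a finite hierarchy of weighted (modified) energy estimates, one for each tangential derivative $\partial_y^j\bar\Phi$. Since $\Psi=-\frac1{C_0}\partial_t\bar\Phi$ and, by $\eqref{2.4}_1$, $\partial_t\Psi=-\frac1{C_0}\partial_t^2\bar\Phi=\frac{\kappa}{C_0}\partial_t\bar\Phi-C_0\partial_y^2\bar\Phi$, all four quantities appearing in the statement are controlled by $\|\bar\Phi(t,\cdot)\|_{H^k([0,1])}$ and $\|\partial_t\bar\Phi(t,\cdot)\|_{H^{k-1}([0,1])}$, so it suffices to prove that these two decay exponentially. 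Moreover $\partial_t\partial_y^j\bar\Phi|_{t=0}=-C_0\,\partial_y^j\Psi_0$, so the initial values of the energies below are dominated by $\|\Psi_0\|_{H^k}^2+\|\bar\Phi_0\|_{H^k}^2$, which is in turn dominated by the right-hand sides of the two asserted inequalities (the squares there simply reflect that it is the quadratic energy, not the norm, that propagates).

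First I would settle the boundary bookkeeping, which is where \eqref{2.7} enters. Differentiating \eqref{2.4} $j$ times in $y$ shows that $\phi_j:=\partial_y^j\bar\Phi$ again solves $\partial_t^2\phi_j+\kappa\partial_t\phi_j-C_0^2\partial_y^2\phi_j=0$. Combining \eqref{2.7} with its $\partial_t$-derivatives, one has: if $j$ is odd then $\phi_j|_{y=0,1}=\partial_t\phi_j|_{y=0,1}=0$, while if $j$ is even then $\partial_y\phi_j=\partial_y^{j+1}\bar\Phi$ vanishes at $y=0,1$. In either case every boundary term produced below by integration by parts in $y$ vanishes. Moreover, for $j\ge1$ the profile $\phi_j$ obeys a Poincaré inequality on $[0,1]$ --- of Dirichlet type when $j$ is odd, and of zero-mean type when $j\ge2$ is even (since then $\int_0^1\phi_j\,dy=[\partial_y^{j-1}\bar\Phi]_0^1=0$ by \eqref{2.7}) --- so that $\|\phi_j\|_{L^2}\le\frac1\pi\|\partial_y^{j+1}\bar\Phi\|_{L^2}$.

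For $1\le j\le k-1$ I would then work with the modified energy
\[
\mathcal F_j(t)=\tfrac12\int_0^1\!\Big(|\partial_t\phi_j|^2+C_0^2|\partial_y\phi_j|^2\Big)dy+\delta\int_0^1\partial_t\phi_j\,\phi_j\,dy,
\]
with $\delta>0$ small. The equation for $\phi_j$ and the vanishing of the boundary terms yield
\[
\frac{d}{dt}\mathcal F_j=-(\kappa-\delta)\,\|\partial_t\phi_j\|_{L^2}^2-\delta C_0^2\,\|\partial_y\phi_j\|_{L^2}^2-\kappa\delta\int_0^1\partial_t\phi_j\,\phi_j\,dy,
\]
and the cross term is absorbed by Young's inequality together with $\|\phi_j\|_{L^2}\le\frac1\pi\|\partial_y\phi_j\|_{L^2}$. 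Fixing $\delta$ small (depending only on $\kappa$ and $C_0$) gives $\frac{d}{dt}\mathcal F_j\le-\alpha\,\mathcal F_j$ for some $\alpha>0$, while the same Poincaré inequality makes $\mathcal F_j$ equivalent to $E_j:=\tfrac12\int_0^1(|\partial_t\phi_j|^2+C_0^2|\partial_y\phi_j|^2)\,dy$. Integrating this differential inequality gives $E_j(t)\lesssim E_j(0)e^{-\alpha t}$, i.e.\ exponential decay of $\|\partial_t\partial_y^j\bar\Phi\|_{L^2}$ and of $\|\partial_y^{j+1}\bar\Phi\|_{L^2}$.

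The zero level $j=0$ needs a small adjustment. Write $\bar\Phi=m(t)+\tilde\Phi(t,y)$ with $m(t)=\int_0^1\bar\Phi(t,y)\,dy$; integrating $\eqref{2.4}_1$ in $y$ and using \eqref{2.7} gives $m''+\kappa m'=0$, so $m'(t)=m'(0)e^{-\kappa t}$ decays and $m(t)$ converges to a constant, while $\tilde\Phi$ solves the same damped wave equation with $\partial_y\tilde\Phi|_{y=0,1}=0$ and $\int_0^1\tilde\Phi\,dy=0$, hence satisfies the modified energy estimate above (now through the zero-mean Poincaré inequality $\|\tilde\Phi\|_{L^2}\le\frac1\pi\|\partial_y\bar\Phi\|_{L^2}$). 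Thus $\|\tilde\Phi(t)\|_{L^2}$, $\|\partial_y\bar\Phi(t)\|_{L^2}$ and $\|\partial_t\bar\Phi(t)\|_{L^2}$ decay exponentially; since $\bar\Phi$ enters \eqref{1.6}--\eqref{1.7} only through its $y$-derivatives, normalizing the additive constant in the magnetic potential so that $m(t)\to0$ also gives the decay of $\|\bar\Phi(t)\|_{L^2}$. Summing over $0\le j\le k-1$, decreasing $\alpha$ if necessary to accommodate this last mode, and rewriting the conclusion in terms of $\Psi$ and $\partial_t\Psi$ via the identities of the first paragraph completes the proof. The step I expect to require the most care --- and the only one that is not essentially routine --- is this boundary bookkeeping at every derivative level (exactly where the compatibility conditions \eqref{2.7} are needed), together with the Poincaré-free handling of the non-decaying zero mode of $\bar\Phi$.
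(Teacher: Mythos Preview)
Your argument is correct, and it takes a genuinely different route from the paper. The paper expands $\bar\Phi$ in the cosine basis $\{\cos m\pi y\}_{m\ge0}$ on $[0,1]$, reduces $\eqref{2.4}_1$ to the constant-coefficient second-order ODE in $t$ for each mode, and then splits into the three cases $\kappa^2-4C_0^2m^2\pi^2$ positive, zero, negative (underdamped/critically damped/overdamped), writing out the solution explicitly in each and reading off the decay rate; the final $\alpha$ is obtained as the minimum of $\tfrac{\kappa}{4}$ and the smallest real part arising in Case~1. Your approach replaces all of this by a single modified-energy computation (the cross term $\delta\int\partial_t\phi_j\,\phi_j$ is the standard trick for damped waves) together with Poincar\'e, which is considerably cleaner and avoids any case analysis. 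What the paper's spectral argument buys is an explicit formula for the decay rate in terms of $\kappa$ and $C_0$; what your argument buys is robustness and brevity. Your handling of the zero mode $m(t)=\int_0^1\bar\Phi\,dy$ is actually more careful than the paper's: the paper asserts that $\lambda\ge0$ forces $\bar\Phi_1\equiv0$, which overlooks the constant eigenfunction at $\lambda=0$, whereas you correctly isolate the non-decaying additive constant and note that it is irrelevant for the perturbed system since only $\partial_y\bar\Phi$ appears there.
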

\begin{proof}

By writing $\bar{\Phi}(t,y)=\Bar{\Phi}_{1}(y)\Bar{\Phi}_{2}(t)$, then from $\eqref{2.4}$, we have
\begin{equation}\label{2.8}
\begin{cases}
\partial^{2}_y\Bar{\Phi}_{1}(y)=\frac{\lambda}{C_0^{2}}\Bar{\Phi}_{1}(y),\\
 \partial^{2}_{t}\Bar{\Phi}_{2}(t)+\kappa\partial_{t}\Bar{\Phi}_{2}(t)=\lambda\Bar{\Phi}_{2}(t),\\
\partial_y \bar{\Phi}(t,y)|_{y=0,1}=0, \\ (\bar{\Phi},\Psi)|_{t=0}=(\bar{\Phi}_0(y),\Psi_0(y)),
\end{cases}
\end{equation}
with $\lambda$ being a constant.

Moreover, with   boundary conditions \eqref{1.8} and  \eqref{2.7}, we have
\begin{equation}\label{2.9}
\begin{cases}
\bar{\Phi}_0=\sum_{J=0}^\infty C_J \cos J\pi y, \\
\Psi_0=\sum_{J=0}^\infty D_J \cos J\pi y,
\end{cases} y\in [0,1],
\end{equation}
with
\begin{equation}\label{2.10}
C_J=2\int_0^1\bar{\Phi}_0 \cos (J\pi y) dy, \quad
D_J=2\int_0^1 \Psi_0 \cos (J\pi y) dy.
\end{equation}

For the case $\lambda\geq0$, by noting the boundary condition $\eqref{2.8}_3$,  it is obvious  that $\Bar{\Phi}_{1}(y)\equiv0$, which concludes that $\Bar{\Phi}(t, y)\equiv0$.

 When $\lambda<0$, from $\eqref{2.8}_1$, by a standard procedure, we get
 \begin{equation}\label{2.11}
 \Bar{\Phi}_{1}(y)=C_1^1cos(\sqrt{\frac{-\lambda}{C_0^{2}}}y)
+C_{2}^1sin(\sqrt{\frac{-\lambda}{C_0^{2}}}y).
\end{equation}
Recalling the boundary condition $\eqref{2.8}_3$, we get   $C_{2}^1=0$ and
\begin{equation}\label{2.12}
\lambda=-(C_0m\pi)^{2}, \quad \Bar{\Phi}_{1}(y)=C_{1}^1\cos(\sqrt{(m\pi)^{2}}y), \quad m\in{\mathbb{Z}}.
 \end{equation}

 From $\eqref{2.8}_2$ and \eqref{2.11}, we denote
\begin{equation*}
\gamma_{1,2}=\frac{-\kappa\pm\sqrt{\kappa^{2}-4(C_0m\pi)^{2}}}{2},
\end{equation*}
there are the following three different cases.

{\bf Case 1.}  $m^{2}<\frac{\kappa^{2}}{4C_0^{2}\pi^{2}}=\mathbb{K}_0^2$. In this case, there are only finite terms of $m$. We  get the solution of $\eqref{2.4}_1$ and \eqref{2.7}  as
 \begin{equation}\label{2.13}
\Bar{\Phi}^{(1)}(t,y)=\sum_{m< \mathbb{K}_0} (A_m e^{\gamma_1t}+B_m e^{\gamma_2t})\cos m\pi y, \quad m\in\mathbb{N}.
\end{equation}

Using the initial conditions \eqref{2.9} and $\eqref{2.4}_2$, then there holds
\begin{eqnarray}\label{2.14}
&&A_m=\frac{2}{\sqrt{\kappa^2-4C_0^2m^2\pi^2}}\big(\gamma_2\int_0^1\bar{\Phi}_0(y)\cos m\pi y dy+C_0\int_0^1\bar{\Psi}_0(y)\cos m\pi y dy\big) \\\nonumber
&&=\frac{1}{\sqrt{\kappa^2-4C_0^2m^2\pi^2}}\big( \gamma_2 C_m+  C_0 D_m \big),
\end{eqnarray}
and
\begin{eqnarray}\label{2.15}
&&B_m=\frac{-2}{\sqrt{\kappa^2-4C_0^2m^2\pi^2}}\big(\gamma_1\int_0^1\bar{\Phi}_0(y)\cos m\pi y dy+C_0\int_0^1\bar{\Psi}_0(y)\cos m\pi y dy\big) \\\nonumber
&&=\frac{-1}{\sqrt{\kappa^2-4C_0^2m^2\pi^2}}\big( \gamma_1 C_m+  C_0 D_m\big),
\end{eqnarray}
with $C_m$, $D_m$ are given by \eqref{2.10} and $m\in \mathbb{N}$.

Therefore, by \eqref{2.9}, \eqref{2.12}, \eqref{2.14} and \eqref{2.15}, we get
 \begin{eqnarray}\label{2.16}
\Bar{\Phi}^{(1)}(t,y)&=&\sum_{m< \mathbb{K}_0}\frac{1}{\sqrt{\kappa^2-4C_0^2m^2\pi^2}}\big(\gamma_2C_m+C_0D_m)(e^{2\gamma_1t}+e^{2\gamma_2t)} \\\nonumber
&&+\frac{1}{\sqrt{\kappa^2-4C_0^2m^2\pi^2}}\big( C_0D_m-\gamma_1C_m)e^{\gamma_2t}\cos m\pi y.
\end{eqnarray}

 Thus taking $
\alpha_0=\min\{\frac{\kappa-\sqrt{\kappa^{2}-4(C_0m\pi)^{2}}}{2}|_{m^{2}<\frac{\kappa^{2}}{4C_0^{2}\pi^{2}}}\}, $ we have
 \begin{eqnarray}\label{2.17}
\|\partial_y^k\Bar{\Phi}^{(1)}(t,y)\|_{L^2[0,1]}^2&\lesssim& e^{-2\alpha_0t}\sum_{m\leq \mathbb{K}_0}  (C_m^2+C_0^2D_m^2)m^{2k}\pi^{2k}\\\nonumber
  &\lesssim& e^{-2\alpha_0t}(\|\bar{\Phi}_0\|_{H^k}^2+C_0^2\|\Psi_0\|_{H^k}^2).
\end{eqnarray}

{\bf Case 2. $m^{2}=\frac{\kappa^{2}}{4C_0^{2}\pi^{2}}=\mathbb{K}_0^2$}. In this case, $\gamma_1=\gamma_2=-\frac{\kappa}2,$ we get the solution for  $\eqref{2.4}_1$, \eqref{2.7} and \eqref{2.9} as
 \begin{equation}\label{2.18}
\Bar{\Phi}^{(2)}(t,y)= (A_{\mathbb{K}_0}^{(1)}+B_{\mathbb{K}_0}^{(1)}t) e^{-\frac{\kappa}{2}t} \cos {\mathbb{K}_0}\pi y, \quad m\in\mathbb{N}.
\end{equation}
Similarly to \eqref{2.14}, we have
\begin{equation}\label{2.19}
 A_{\mathbb{K}_0}^{(1)}=C_{\mathbb{K}_0}, \quad  B_{\mathbb{K}_0}^{(1)}
 =\frac{\kappa}{2}C_{\mathbb{K}_0}-C_0D_{\mathbb{K}_0}.
\end{equation}

From \eqref{2.18}, \eqref{2.19} and \eqref{2.9}, for this case we have
 \begin{equation}\label{2.20}
\|\partial_y^k\Bar{\Phi}^{(2)}(t,y)\|_{L^2[0,1]}^2\lesssim e^{-\frac34\kappa t}(\|\bar{\Phi}_0\|_{H^k}^2+C_0^2\|\Psi_0\|_{H^k}^2) .
\end{equation}

{\bf Case 3. $m^{2}>\frac{\kappa^{2}}{4C_0^{2}\pi^{2}}=\mathbb{K}_0^2$}. In this case,  we have
 \begin{eqnarray}\label{2.21}
\Bar{\Phi}^{(3)}(t,y)&=&\sum_{m> \mathbb{K}_0}^\infty \big(A_m^{(2)}\cos\sqrt{\frac{4(C_0m\pi)^{2}-\kappa^{2}}{4}}t
\\\nonumber
&&+B_m^{(2)}   \sin\sqrt{\frac{4(C _0m\pi)^{2}-\kappa^{2}}{4}}t \big) e^{-\frac{ \kappa}{2}t} \cos m\pi y.
\end{eqnarray}

Again, by using  initial conditions \eqref{2.9} and \eqref{2.21},
 we get
 \begin{eqnarray}\label{2.22}
\Bar{\Phi}^{(3)}(t,y)&=& e^{-\frac{ \kappa}{2}t}\sum_{m>\mathbb{K}_0}^\infty \cos m\pi y\big(C_m \cos\sqrt{\frac{4(C_0m\pi)^{2}-\kappa^{2}}{4}}t +
\\\nonumber
&&+\frac{4}{\sqrt{4C_0^2m^2\pi^2-\kappa^2}}\big(\frac{\kappa}{2}C_m-C_0D_m\big)  \sin\sqrt{\frac{4(C _0m\pi)^{2}-\kappa^{2}}{4}}t \big).
\end{eqnarray}

Furthermore, we have
 \begin{eqnarray}\label{2.23}
\|\partial_y^k\Bar{\Phi}^{(3)}\|_{L^2[0,1]}^2&\lesssim & e^{-\kappa t}\sum_{m>\mathbb{K}_0}^\infty m^{2k}\pi^{2k}\bigg( C_m^2 \cos^2\sqrt{\frac{4(C_0m\pi)^{2}-\kappa^{2}}{4}}t +
\\\nonumber
&&+(C_m^2+C_0^2D_m^2)  \sin^2\sqrt{\frac{4(C _0m\pi)^{2}-\kappa^{2}}{4}}t \bigg)\\\nonumber
&\lesssim & e^{-\kappa t}(\|\bar{\Phi}_0\|_{H^k}^2+C_0^2\|\Psi_0\|_{H^k}^2).
\end{eqnarray}

In conclusion, taking $\alpha=\min\{\alpha_0, \frac{\kappa }4\},$ from \eqref{2.17}, \eqref{2.20} and \eqref{2.23}
we have
\begin{equation}\label{2.24}
\|\bar{\Phi}(t,\cdot)\|_{H^k([0,1])}\leq \sum_{i=1}^3 \|\bar{\Phi}^{(i)}(t,\cdot)\|_{H^k([0,1])} \lesssim (\|\bar{\Phi}_0\|_{H^k}^2+C_0^2\|\Psi_0\|_{H^k}^2)e^{-\alpha t},
\end{equation}
which also implies that
\begin{equation}\label{2.25}
\|\partial_t\Phi(t,\cdot)\|_{H^{k-1}([0,1])} \lesssim (\|\Psi_0\|_{H^k}^2+ \|\Psi_1\|_{H^k}^2)e^{-\alpha t}.
\end{equation}

Similarly, by using  $\eqref{2.4}_2$ and \eqref{2.8}, we have
\begin{equation}\label{2.26}
\|\Psi(t,\cdot)\|_{H^{k-1}([0,1])}
\ \mbox{and} \
\|\partial_t\Psi(t,\cdot)\|_{H^{k-2}([0,1])} \lesssim (\|\Psi_0\|_{H^k}^2+ \|\Psi_1\|_{H^k}^2)e^{-\alpha t}.
\end{equation}

\end{proof}

\subsection{Boundary conditions}
Take $\partial_y$ to  equation \eqref{1.6} and recall $\partial_{y}u_{1}|_{y=0,1}=0$, we have
\begin{equation}\label{2.27}
\big(\partial_{t}\partial_{y}\rho  +(u_{1}+\Psi)\partial_{x}\partial_{y}\rho+
\partial_{y}u_{2}\partial_{y}\rho\big)|_{y=0,1}=0.
\end{equation}

Since we have the initial condition  $\partial_{y}^{2n-1}\rho_0 (x,y)|_{y=0,1}=0,$
  we obtain
\begin{equation}\label{2.28}
\partial_{y}\rho(x,y)|_{y=0,1}=0.
\end{equation}

Writing $ \omega=\nabla^\bot \cdot   u $,  then the boundary condition \eqref{1.8} implies   $\omega|_{y=0,1}=0.$ Besides, from the incompressible constraint  $\nabla\cdot u=0$  we have
\begin{equation}\label{2.29}
\partial_{y}^{2}u_{2}|_{y=0,1}=0.
\end{equation}

From $\eqref{1.7}$, we have
\begin{multline}\label{2.30}
\partial_{t}\omega-\partial_{t}(\partial_{y}\Psi)+(u_{1}+\Psi)\partial_{x} \omega+u_{2}\partial_{y}(\omega-\partial_{y}\Psi)+\kappa(\omega-\partial_{y}\Psi)\\
=(\partial_{y}\rho+\partial_{y}\Bar{\Phi})(-\partial_{x}^{3}\rho-\partial_{x}\partial_{y}^{2}\rho)
+(\partial_{x}\rho+C_0)(\partial_{x}^{2}\partial_{y}\rho+\partial_{y}^{3}\rho+\partial_{y}^{3}\Bar{\Phi}).
\end{multline}

By using  \eqref{1.6}, \eqref{2.7} and \eqref{2.30}, there holds
\begin{equation}\label{2.31}
\partial_{y}^{3}\rho|_{y=0,1}=0.
\end{equation}

Moreover, from \eqref{1.6} we also have
\begin{multline}\label{2.32}
\partial_{t}\partial_{y}^{3}\rho+\partial_{y}^{3}u_2\partial_{y}\bar{\Phi}+u_2\partial_y^4\bar{\Phi}+3\partial_{y}^2u_2\partial_y^2\bar{\Phi}+3\partial_yu_2\partial_y^3\bar{\Phi}+C_0\partial_y^3u_1+(\partial_y^3u\cdot\nabla)\rho\\
+(u\cdot\nabla)\partial_y^3\rho+3(\partial_y^2u\cdot\nabla)\partial_y\rho+3(\partial_yu\cdot\nabla)\partial_y^2\rho+\partial_y^3\Psi\partial_x\rho+\Psi\partial_x\partial_y^3\rho\\
+3\partial_y\Psi\partial_x\partial_y^2\rho+3\partial_y^2\Psi\partial_{xy}\rho=0.
\end{multline}

Combining \eqref{2.7}, \eqref{2.30} with \eqref{2.28}-\eqref{2.32}, we get
\begin{equation}\label{2.33}
\partial_{y}^{3}u_{1}|_{y=0,1}=0.
\end{equation}

Repeating the iterative process above, we get  boundary  compatible conditions as
\begin{equation}\label{2.34}
\partial_{y}^{2n}u_{2}|_{y=0,1}=0,\quad \partial_{y}^{2n-1}u_{1}|_{y=0,1}=0,\quad \partial_{y}^{2n-1}\rho|_{y=0,1}=0, \quad n\in\mathbb{N}^+.
\end{equation}
\subsection{Notations}
In order to solve our problem   in a certain Sobolev space, recalling the boundary conditions \eqref{2.34}, we  introduce  following function spaces:
\begin{equation}\label{2.35}
\begin{cases}
X^{k}(\Omega)=\{f\in H^{k}(\Omega):\partial^{2n}_{y}f|_{\partial\Omega}=0,n\in \mathbb{N}^+\}
,\\
Y^{k}(\Omega)=\{f\in H^{k}(\Omega):\partial^{2n-1}_{y}f|_{\partial\Omega}=0,n\in \mathbb{N}^+\}.
\end{cases}
\end{equation}

To our problem, we  define the following functional space as:
\begin{equation}\label{2.36}
\aleph^k(\Omega):=\{\emph{v}\in H^{k}(\Omega),\emph{v}=(v_{1},v_{2})\in Y^{k}(\Omega)\times X^{k}(\Omega)\}.
\end{equation}

For convenience, in this paper, we use $L^{2}$, $\dot{H}^{k}$ and $H^{k}$ to stand for $L^{2}(\Omega)$, $\dot{H}^{k}(\Omega)$ and $H^{k}(\Omega)$ respectively. We also use $<,> $ as the scalar product  in $L^2$.

\subsection{An orthonormal basis for $X^{k}(\Omega),Y^{k}(\Omega)$.}
Let us start by defining
\begin{equation}\label{2.37}
a_{q}(y):=
\begin{cases}
cos(qy\frac{\pi}{2}) \quad q\quad odd,\\
sin(qy\frac{\pi}{2}) \quad q\quad even,
\end{cases}
with \ y\in [0,1], \quad  q\in  \mathbb{ N}^+\cup\{0\},
\end{equation}
and
\begin{equation}\label{2.38}
b_{q}(y):=
\begin{cases}
sin(qy\frac{\pi}{2}) \quad q\quad odd,\\
cos(qy\frac{\pi}{2}) \quad q\quad even,
\end{cases}
 with \ y\in [0,1], \quad q\in \mathbb{ N}^+\cup\{0\},
\end{equation}
where $(\{a_{q}\}_{q\in \mathbb{N}},\{b_{q}\}_{q\in \mathbb{N}})$ is the  orthonormal basis for $L^{2}([0,1])$.
\begin{remark}
Actually, the basis given above  is chosen based on the boundary conditions and the Fourier series. More precisely,   for $f\in L^{2}(\Omega)$, we have the $L^{2}(\Omega)$-convergence series  given by:
\begin{equation*}
  f(x,y)= \sum_{q\in \mathbb{ N}\cup { 0} } \mathcal{F}_{a}[f](x,q)a_{q}(y) \quad where\quad  \mathcal{F}_a[f](x,q):=\int_0^1 f (x ,y )\overline{a_q(y)}dy
\end{equation*}
or\\
\begin{equation*}
  f(x,y)=\sum_{q\in \mathbb{N}\cup\{0\}} \mathcal{F}_{b}[f](x,q)b_{q}(y) \quad where\quad  \mathcal{F}_{b}[f](x,q):=\int_0^1f(x,y)\overline{b_{q}(y)}dy,
\end{equation*}
\end{remark}
with $\overline{a_q(y)}$ and $\overline{b_{q}(y)}$ are the conjugate of  $a_q(y)$ and $b_q(y)$ respectively.

\section{The energy estimates}
For simplicity,  we denote
\begin{equation}\label{3.1}
E_{k}^2(t):=\frac{1}{2}\{\|u\|_{H^{k}}^{2}+\|\partial_{t}u\|_{H^{k}}^{2}+\|\rho\|_{H^{k+1}}^{2}+\|\partial_{t}\rho\|_{H^{k+1}}^{2}\},
\end{equation}
and the following  weighted energy, which   plays a crucial role in our proof,
\begin{equation}\label{3.2}
\Gamma_{k}^2(t):=\frac{1}{2}\{\int_{\Omega}(1+\partial_{x}\rho)|\nabla^ku|^{2}+(1+\partial_{x}\rho)|\nabla^{k}\nabla^{\bot}\rho|^{2}dxdy\}.
\end{equation}

 \subsection{Local well-posedness}
 To prove our main theorem, we first claim  the local well-posedness for the system \eqref{1.6}-\eqref{1.8}, which can be verified by a standard Galerkin procedure. Here, we omit the proof for simplicity, and give the results as follows:
 \begin{proposition}
For the system  \eqref{1.6}-\eqref{1.8}, with  the initial data $(u_0,\rho_0)\in \aleph^k(\Omega)\times  Y^{k+1}(\Omega)$ and $(\Psi_0,\bar{\Phi}_0)\in Y^{k+3}$, then there exists a positive constant $T$, such that the system  \eqref{1.6}-\eqref{1.8} admits a unique pair of solutions $(u,\rho)$ on $[0,T]$. Moreover, for  $k\geq 3$ there holds
 \begin{equation}\label{3.3}
 \sup_{0\leq t\leq T}E_k^2(t)\leq M E_k^2(0),
 \end{equation}
 with $M$ being a constant.
 \end{proposition}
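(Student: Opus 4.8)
The plan is to construct the solution by a Galerkin scheme adapted to the boundary structure of $\aleph^{k}(\Omega)$ and $Y^{k+1}(\Omega)$, to establish a closed energy estimate uniform in the truncation parameter, and then to pass to the limit and prove uniqueness. Concretely, I would combine the Fourier transform in $x$ with the orthonormal systems $\{a_q\}_{q}$, $\{b_q\}_{q}$ of Section 2.4; truncating to the first $N$ modes produces finite dimensional subspaces that automatically respect the parity conditions defining $X^{k},Y^{k}$ (hence the boundary conditions \eqref{2.34}) and the zero-mean constraints in \eqref{1.9}. Projecting \eqref{1.6}--\eqref{1.7} onto these subspaces, with the pressure removed by the Leray projection (which is consistent with $u_2|_{y=0,1}=0$), yields an ODE system with locally Lipschitz nonlinearity, so standard ODE theory gives a unique approximate solution $(u^N,\rho^N)$ on a maximal interval, still satisfying the boundary conditions by construction.

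The heart of the matter is a uniform a priori estimate for $E_k$. I would apply $\partial^\alpha$ with $|\alpha|\le k$ to \eqref{1.7} and pair with $\partial^\alpha u^N$, apply $\partial^\alpha\nabla^{\bot}$ to \eqref{1.6} and pair with $\partial^\alpha\nabla^{\bot}\rho^N$, and sum over $\alpha$. Three structural facts make this close. First, the principal coupling --- the term $(\nabla^{\bot}\bar{\bar{\Phi}}\cdot\nabla)\nabla^{\bot}\rho$ together with the lower order $\bar{\Phi}$-terms of \eqref{1.7}, and the term $u_2\partial_y\bar{\Phi}+C_0 u_1$ of \eqref{1.6} --- is exactly the linearization about $(\bar U,\bar{\bar{\Phi}})$ of the ideal MHD coupling $(B\cdot\nabla)B$ and $(U\cdot\nabla)\Phi$, so its top-order part enjoys the standard MHD cancellation once one uses $\nabla\cdot\nabla^{\bot}\bar{\bar{\Phi}}=0$; this is precisely why $\rho$ is measured one order above $u$ in $E_k$ and why no derivative is lost in the coupling. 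Second, the damping term contributes $\kappa\|\partial^\alpha u^N\|_{L^2}^2$ to the left-hand side, which absorbs the lower order coupling contributions. Third, every boundary term generated by these integrations by parts vanishes, thanks to \eqref{2.7}, \eqref{2.34} and $\partial_y\bar{\Phi}|_{y=0,1}=0$. The remaining genuinely nonlinear and commutator terms --- $(u\cdot\nabla)u$, $(\nabla^{\bot}\rho\cdot\nabla)\nabla^{\bot}\rho$, $\Psi\partial_x u$, $(u_2\partial_y\Psi,0)^T$, $(u\cdot\nabla)\rho$, and so on --- are controlled by the Moser inequality \eqref{2.1}, the embedding $H^{k}\hookrightarrow L^\infty$ valid for $k\ge 3$, and the background bounds of Proposition 2.2, which yields $\tfrac{d}{dt}E_k^2(t)\lesssim \mathcal{P}\!\left(E_k(t)\right)$ for a polynomial $\mathcal{P}$. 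A continuity argument then gives $T>0$ and a constant $M>1$, independent of $N$, with $\sup_{[0,T]}E_k^2\le M E_k^2(0)$; the time-derivative pieces of $E_k$ are read off directly from \eqref{1.6}--\eqref{1.7}.

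With the uniform bound in hand, I would extract a weak-$*$ limit $(u,\rho)$ and upgrade it to strong convergence in lower order norms by the Aubin--Lions lemma, which is enough to pass to the limit in every nonlinear term; the limit inherits membership in $\aleph^{k}(\Omega)\times Y^{k+1}(\Omega)$ because these spaces are weakly closed. Uniqueness follows by subtracting two solutions: the difference solves a linear system with the same skew-symmetric principal coupling, and the same energy computation together with Gr\"onwall's inequality forces it to vanish; strong continuity of $(u,\rho)$ in the $E_k$-norm is then obtained in the usual way from weak continuity and continuity of $t\mapsto E_k(t)$. The step I expect to be the real obstacle is making the first two structural points above rigorous at high order simultaneously --- tracking the MHD cancellation and the vanishing of all boundary contributions without a careless commutation producing a spurious loss of one derivative in the $\rho$--$u$ coupling. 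The weighted functional $\Gamma_k$ of \eqref{3.2} plays no role at this stage, but it is the device that will absorb the analogous loss in the long-time estimates of Sections 3 and 4.
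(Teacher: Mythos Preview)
Your proposal is correct and follows essentially the same approach as the paper: the authors simply state that the local well-posedness ``can be verified by a standard Galerkin procedure'' and omit the proof entirely, so your detailed Galerkin sketch is in fact a fleshed-out version of what they merely assert. There is nothing to compare or correct.
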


\subsection{Energy Estimates}

\begin{proposition}(Energy Estimates) Under the condition of Proposition 3.1, we assume that the initial data $(u_0,\rho_0)\in \aleph^k(\Omega)\times  Y^{k+1}(\Omega)$ is small enough,
 such that
\begin{equation}\label{3.4}
\|\partial_x \rho(t,\cdot)\|_{H^2(\Omega)} <\frac14, \quad t\in [0,T],
\end{equation}
then  we have  the following energy  estimates for the system \eqref{1.6}-\eqref{1.9}:
\begin{equation}\label{3.5}
\frac{d}{dt} E_k^2 \lesssim (E_k^2+\Gamma_{k+1}^2)(E_4 +\|\bar{\Phi}\|_{H^{k+2}}+\|\Psi\|_{H^{k+2}}+\|\partial_t\Psi\|_{H^{k+1}}).
\end{equation}

\end{proposition}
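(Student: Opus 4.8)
The plan is to differentiate \eqref{1.6}--\eqref{1.7} up to order $k$ in space --- and, for the $\partial_t$-parts of $E_k^2$, once more in $t$ --- and to run $L^2$ energy estimates, the only structural input being that the first-order coupling between the transported quantity $\rho$ and the velocity $u$ cancels identically by incompressibility. I will work with $b:=\nabla^{\bot}\rho=(b_1,b_2)$ rather than $\rho$; since $|\nabla f|=|\nabla^{\bot}f|$ pointwise and $\nabla\cdot b\equiv 0$, one has $\|\rho\|_{H^{k+1}}^{2}\lesssim\|\rho\|_{L^{2}}^{2}+\|b\|_{H^{k}}^{2}$ (and likewise for $\partial_t\rho$), so it suffices to bound $\frac{d}{dt}$ of $\|u\|_{H^{k}}^{2}+\|\partial_t u\|_{H^{k}}^{2}+\|b\|_{H^{k}}^{2}+\|\partial_t b\|_{H^{k}}^{2}$ together with $\|\rho\|_{L^2}^2$ and $\|\partial_t\rho\|_{L^2}^2$, the latter two being controlled directly from \eqref{1.6}. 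Applying $\nabla^{\bot}$ to \eqref{1.6} gives a transport-type equation for $b$ whose only dangerous linear term is $C_{0}\nabla^{\bot}u_{1}$, while the dangerous linear term in \eqref{1.7} is $-C_{0}\nabla^{\bot}\partial_{y}\rho=-C_{0}\partial_{y}b$. For each multi-index $|\alpha|\le k$ I apply $\partial^{\alpha}$ to both equations, pair with $\partial^{\alpha}u$ and $\partial^{\alpha}b$ respectively, and integrate by parts once in $y$; the boundary terms on $\{y=0,1\}$ vanish by the parity structure of the spaces \eqref{2.35}--\eqref{2.36} (equivalently the compatibility conditions \eqref{2.34}), and the two $C_0$-contributions combine into $C_{0}\int_{\Omega}\partial^{\alpha}b_{2}\,\partial^{\alpha}(\partial_{x}u_{1}+\partial_{y}u_{2})\,dxdy=C_{0}\int_{\Omega}\partial^{\alpha}b_{2}\,\partial^{\alpha}(\nabla\cdot u)\,dxdy=0$. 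This exact, coefficient-free cancellation is the ``non-local damping mechanism'' of the introduction. For the same reason the pressure drops, $\int\partial^{\alpha}\nabla P\cdot\partial^{\alpha}u=-\int\partial^{\alpha}P\,\partial^{\alpha}(\nabla\cdot u)+(\text{bdry})=0$, the boundary contribution vanishing because $\partial_{y}^{2n-1}P|_{\partial\Omega}=0$, which is derived from \eqref{1.7} exactly as in Section 2; and the damping gives a favorable $-\kappa(\|u\|_{H^k}^2+\|\partial_t u\|_{H^k}^2)\le 0$ that I discard.

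After this cancellation every remaining term carries at least one ``small'' factor: either a factor of the solution controlled in $L^\infty$ by the lower-order energy $E_4$, or a factor of the background $(\Psi,\bar{\Phi},\partial_t\Psi)$. I bound each by the product inequality \eqref{2.1}, keeping the highest-regularity factor in $L^2$ and the rest in $L^\infty$ via $H^2(\Omega)\hookrightarrow L^\infty(\Omega)$ and $H^1([0,1])\hookrightarrow L^\infty([0,1])$. Three families of terms genuinely cost one derivative --- the Lorentz term $(\nabla^{\bot}\rho\cdot\nabla)\nabla^{\bot}\rho=(b\cdot\nabla)b$ in \eqref{1.7}, and the background-coupled pair $\partial_{y}\bar{\Phi}\,\partial_{x}b$ in \eqref{1.7} and $\nabla^{\bot}(u_{2}\partial_{y}\bar{\Phi})$ in the $b$-equation --- for which I integrate by parts once more (along $b$ for the Lorentz term, in $x$ for the $\bar{\Phi}$-terms, using $\partial_{x}\bar{\Phi}=0$) and bound the outcome by $\|b\|_{L^\infty}\|b\|_{\dot{H}^{k+1}}\|u\|_{\dot{H}^{k}}$, respectively $\|\bar{\Phi}\|_{H^{k+2}}\|b\|_{\dot{H}^{k+1}}\|u\|_{\dot{H}^{k}}$. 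Since $1+\partial_{x}\rho$ is bounded above and below by positive constants by \eqref{3.4}, $\|u\|_{\dot{H}^{k+1}}^{2}+\|b\|_{\dot{H}^{k+1}}^{2}\lesssim\Gamma_{k+1}^{2}$, so Young's inequality turns these into $\lesssim(E_4+\|\bar{\Phi}\|_{H^{k+2}})(E_k^2+\Gamma_{k+1}^2)$. All other terms --- the subleading Leibniz pieces of $(u\cdot\nabla)u$, $\Psi\partial_{x}u$ and $(b\cdot\nabla)b$, and the lower-order couplings $\partial_{x}\rho\,\partial_{y}^{2}\bar{\Phi}$, $(u_{2}\partial_{y}\Psi,0)^{T}$, $\partial_{y}^{2}\bar{\Phi}\,b_2$, $\partial_{y}\Psi\,b_2$, and so on --- lose no derivative and are bounded by $\lesssim(E_4+\|\Psi\|_{H^{k+2}}+\|\bar{\Phi}\|_{H^{k+2}})E_k^2$; the index $k+2$ arises precisely when $\partial^{\alpha}$ lands on a background factor, which is then kept, with up to $k+2$ $y$-derivatives, in $L^{2}([0,1])$.

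For the parts of $E_k^2$ involving $\partial_t u$ and $\partial_t\rho$ I first differentiate \eqref{1.6}--\eqref{1.7} once in $t$ and then repeat the scheme verbatim: the $C_0$-cancellation, the vanishing of the pressure term, and the transport cancellations are unchanged. The only genuinely new contributions are those in which $\partial_t$ falls on the background coefficients; using \eqref{2.4} in the form $\partial_t\bar{\Phi}=-C_0\Psi$ one sees that these carry $\|\partial_t\Psi\|_{H^{k+1}}$ (from $\partial_t$ acting on the $(u_{2}\partial_{y}\Psi,0)^{T}$ and $\Psi\partial_{x}u$ terms), together with $\|\Psi\|_{H^{k+2}}$ and $\|\bar{\Phi}\|_{H^{k+2}}$, times $E_k^2+\Gamma_{k+1}^2$. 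Summing the finitely many resulting differential inequalities over $|\alpha|\le k$ yields \eqref{3.5}.

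The step I expect to be the main obstacle is making the $C_0$-cancellation rigorous, in particular checking that every boundary term produced by the integration by parts in $y$ (and, for the time-differentiated system, with $\partial_t$ inserted) genuinely vanishes: this relies on the observation that $\partial^{\alpha}b_1=\partial^{\alpha}\partial_y\rho$ and $\partial^{\alpha}u_1$ have opposite parity in $\partial_y$ at the wall --- and likewise $\partial^{\alpha}b_2=-\partial^{\alpha}\partial_x\rho$ and $\partial^{\alpha}u_2$ --- so that in each boundary product one factor vanishes on $\{y=0,1\}$ by \eqref{2.34}, which is exactly what the function spaces \eqref{2.35}--\eqref{2.36} encode. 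A secondary, purely bookkeeping, point is to confirm that every term other than the $C_0$-coupling truly carries a small factor ($E_4$ or a background norm), so that the one derivative it may cost can be deposited safely in $\Gamma_{k+1}^{2}$ on the right of \eqref{3.5} rather than in $E_k^2$.
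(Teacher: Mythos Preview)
Your treatment of the non--time-differentiated part of $E_k^2$ is essentially correct and, in one respect, cleaner than the paper's: instead of the exact top-order cancellation between the Lorentz term $(b\cdot\nabla)b$ in the $u$-equation and the stretching piece $(\nabla^{\bot}u\cdot\nabla)\rho$ in the $b$-equation (which the paper works out in \eqref{3.23}), you simply deposit the extra derivative into $\Gamma_{k+1}$. That is legitimate here because $\|b\|_{\dot H^{k+1}}$ and $\|u\|_{\dot H^{k+1}}$ are both controlled by $\Gamma_{k+1}$ under \eqref{3.4}. (You do omit the stretching term from your list of ``derivative-costing'' families, but it is handled by the same bound.)

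The genuine gap is in the $\partial_t$-part. ``Repeat the scheme verbatim'' fails: after time-differentiation, the top-order piece of the Lorentz term reads
\[
\big\langle \nabla^{k}\partial_t u,\ (b\cdot\nabla)\nabla^{k}\partial_t b\big\rangle,
\]
and whether or not you integrate by parts along $b$, one of the two factors now carries $k{+}1$ spatial derivatives on a \emph{time-differentiated} unknown --- either $\|\partial_t b\|_{\dot H^{k+1}}=\|\partial_t\rho\|_{\dot H^{k+2}}$ or $\|\partial_t u\|_{\dot H^{k+1}}$. Neither quantity is controlled by $E_k$ or by $\Gamma_{k+1}$ (which contains no $\partial_t$), so the estimate does not close. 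The same obstruction appears in the background-coupled pair: $\langle\nabla^{k}\partial_t u,\ \partial_y\bar\Phi\,\nabla^{k}\partial_x\partial_t b\rangle$ integrated by parts in $x$ still leaves $k{+}1$ derivatives on $\partial_t u$.

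What the paper does --- and what your plan is missing --- is to pair these dangerous terms with their counterparts from the \emph{other} equation and observe an exact cancellation at top order. Concretely (see \eqref{3.26}),
\[
\big\langle \nabla^{k}\partial_t u,\ (\nabla^{\bot}\rho\cdot\nabla)\nabla^{k}\nabla^{\bot}\partial_t\rho\big\rangle
-\big\langle \nabla^{k}\nabla^{\bot}\partial_t\rho,\ (\nabla^{k}\partial_t\nabla^{\bot}u\cdot\nabla)\rho\big\rangle=0,
\]
and analogously for the $\bar\Phi$-coupled pair (see \eqref{3.34}). Only after this cancellation do the surviving terms carry their extra derivative on $u$ or $\rho$ (not on $\partial_t u$ or $\partial_t\rho$), and those are precisely the ones you can absorb in $\Gamma_{k+1}$ as in \eqref{3.27} and \eqref{3.35}. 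So the $C_0$-cancellation you identify is necessary but not sufficient: for the $\partial_t$-energies you also need this MHD-type nonlinear cancellation, and your proposal should be amended to include it.
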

\begin{proof}
Through the standard energy estimates, noting $u=(u_1,u_2)^T$,  we have
\begin{multline}\label{3.6}
\frac{1}{2}\frac{d}{dt}\|u\|_{L^{2}}^{2}=<u,\partial_{t}u>
=-\kappa\|u\|_{L^2}^2-<u_1,u_{2}\partial_{y}\Psi+\partial_{x}\rho\partial_{y}^{2}\Bar{\Phi}>\\
+<u,(\nabla^{\bot}\rho\cdot\nabla)\nabla^{\bot}\rho+\partial_{y}\Bar{\Phi}(\partial_{xy}\rho,-\partial_{x}^{2}\rho)^T-C_0(\partial_{y}^{2}\rho,-\partial_{xy}\rho)^T>.
\end{multline}

Similarly, by using $\nabla\cdot u=0$  and the boundary condition we have
\begin{eqnarray}\label{3.7}
&&\frac{1}{2}\frac{d}{dt}\|u\|_{\dot{H^{k}}}^{2}=<\nabla^ku,\nabla^k\partial_{t}u>\\\nonumber
&&=<\nabla^ku,\nabla^k[-(u\cdot\nabla)u-\kappa{u}-\Psi\partial_{x}u]>-<\nabla^ku_1,\nabla^k(u_{2}\partial_{y}\Psi+\partial_{x}\rho\partial_{y}^{2}\Bar{\Phi})>\\\nonumber
&&+<\nabla^ku,\nabla^k[(\nabla^{\bot}\rho\cdot\nabla)\nabla^{\bot}\rho+\partial_{y}\Bar{\Phi}(\partial_{xy}\rho,-\partial_{x}^{2}\rho)^T-C_0(\partial_{y}^{2}\rho,-\partial_{xy}\rho)^T]>,
\end{eqnarray}
and
\begin{multline}\label{3.8}
\frac{1}{2}\frac{d}{dt}\|\partial_tu\|_{L^{2}}^{2} =<\partial_{t}u,\partial_{t}[-(u\cdot\nabla)u-\kappa u-\Psi\partial_{x}u
+(\nabla^{\bot}\rho\cdot\nabla)\nabla^{\bot}\rho]>\\
+\partial_{y}\Bar{\Phi}(\partial_{xy}\rho,- \partial_{x}^{2}\rho)^T
-C_0(\partial_{y}^{2}\rho,- \partial_{xy}\rho)^T]>
-<\partial_t u_1,\partial_t[u_2 \partial_y \Psi+\partial_x\rho \partial_y^2\Bar{\Phi}]>,
\end{multline}
as well as
\begin{eqnarray}\label{3.9}
\frac{1}{2}\frac{d}{dt}\|\partial_{t}u\|_{\dot{H^{k}}}^{2}
&=&<\nabla^k\partial_{t}u,\nabla^k\partial_{t}[-(u\cdot\nabla)u-\Psi\partial_{x}u
-\kappa u
\\\nonumber
&&+(\nabla^{\bot}\rho\cdot\nabla)\nabla^{\bot}\rho+\partial_{y}\Bar{\Phi}(\partial_{xy}\rho,-\partial_{x}^{2}\rho)^T
-C_0(\partial_{y}^{2}\rho,-\partial_{xy}\rho)^T]>
\\\nonumber
&&-<\partial_t \partial^ku_1,\partial_t \partial^k[u_2 \partial_y \Psi+\partial_x\rho \partial_y^2\Bar{\Phi}]>.
\end{eqnarray}

Applying $\nabla^\bot$ to \eqref{1.6} and then by a similar procedure, we have
\begin{eqnarray}\label{3.10}
&&\frac12\frac{d}{dt}\|\nabla^{\bot}\rho\|_{\dot{H^{k}}}^{2}=<\nabla^k\nabla^{\bot}\rho,\nabla^k\partial_{t}\nabla^{\bot}\rho>\\\nonumber
&&=<\nabla^k\nabla^{\bot}\rho,\nabla^k\nabla^{\bot}[-u_{2}\partial_{y}\Bar{\Phi}-C_0u_{1}-(u\cdot\nabla)\rho-\Psi\partial_{x}\rho]>,
\end{eqnarray}
and
\begin{eqnarray}\label{3.11}
&&\frac12\frac{d}{dt}\|\nabla^{\bot}\partial_{t}\rho\|_{\dot{H^{k}}}^{2}=<\nabla^k\nabla^{\bot}\partial_{t}\rho,\nabla^k\partial_{t}^{2}\nabla^{\bot}\rho>\\\nonumber
&&=<\nabla^k\nabla^{\bot}\partial_{t}\rho,\nabla^k\nabla^{\bot}\partial_{t}[-u_{2}\partial_{y}\Bar{\Phi}-C_0u_{1}-(u\cdot\nabla)\rho-\Psi\partial_{x}\rho]>.
\end{eqnarray}

From \eqref{3.6}-\eqref{3.11}, we get
\begin{equation}\label{3.12}
\frac{1}{2}
\frac{d}{dt}
(\|u\|_{\dot{H}^k}^{2}+\|\partial_{t}u\|_{\dot{H^{k}}}^{2}+\|\nabla^{\bot}\rho\|_{\dot{H^{k}}}^{2}+\|\nabla^{\bot}\partial_{t}\rho\|_{\dot{H^{k}}}^{2})+\kappa(\|u\|_{\dot{H^{k}}}+\|\partial_{t}u\|_{\dot{H^{k}}})\triangleq \sum_{i=1}^3G_i,
\end{equation}
with
\begin{eqnarray}
&&G_1=-<u_1,u_{2}\partial_{y}\Psi+\partial_{x}\rho\partial_{y}^{2}\Bar{\Phi}>+<u,\partial_{y}\Bar{\Phi}(\partial_{xy}\rho,-\partial_{x}^{2}\rho)^T>\\\nonumber
&&+<u,(\nabla^{\bot}\rho\cdot\nabla)\nabla^{\bot}\rho>-<\partial_t u_1,\partial_t[u_2 \partial_y\Psi+\partial_x\rho\partial_y^2\Bar{\Phi}]>\\\nonumber
&&+<\partial_{t}u,\partial_{t}[\partial_{y}\Bar{\Phi}(\partial_{xy}\rho,-\partial_{x}^{2}\rho)^T]>-<\partial_{t}u,\partial_{t}(\Psi\partial_{x}u)>\\\nonumber
&&-<\partial_{t}u,\partial_{t}[(u\cdot\nabla)u]>+<\partial_{t}u,\partial_{t}[(\nabla^{\bot}\rho\cdot\nabla)\nabla^{\bot}\rho]>\\\nonumber
&&-<\nabla^{\bot}\rho,\nabla^{\bot}(\Psi\partial_{x}\rho)>-<\nabla^{\bot}\rho,\nabla^{\bot}[(u\cdot\nabla)\rho]>\\\nonumber
&&-<\nabla^{\bot}\rho,\nabla^{\bot}(u_{2}\partial_{y}\Bar{\Phi})>+<\nabla^{\bot}\partial_{t}\rho,\nabla^{\bot}\partial_{t}[(u\cdot\nabla)\rho]>\\\nonumber
&&-<\nabla^{\bot}\partial_{t}\rho,\nabla^{\bot}\partial_{t}(u_{2}\partial_{y}\Bar{\Phi})>-<\nabla^{\bot}\partial_{t}\rho,\nabla^{\bot}\partial_{t}(\Psi\partial_{x}\rho)>,
\end{eqnarray}
and
\begin{eqnarray}
&&G_2=-<\nabla^{k}u,\nabla^{k}[(u\cdot\nabla)u]>-<\nabla^{k}u_1,\nabla^{k}[u_2 \partial_y \Psi+\partial_x\rho \partial_y^2\Bar{\Phi}]>\\\nonumber
&&-<\nabla^{k}u,\nabla^{k}[\Psi\partial_{x}u]>-<\nabla^{k}\partial_tu_1,\nabla^{k}\partial_t[u_2\partial_y\Psi+\partial_x\rho\partial_y^2\Bar{\Phi}]>\\\nonumber
&&-<\nabla^{k}\partial_{t}u,\nabla^{k}\partial_{t}(\Psi\partial_{x}u)>-<\nabla^{k}\partial_{t}u,\nabla^{k}\partial_{t}[(u\cdot\nabla)u]>\\\nonumber
&&-<\nabla^{k}\nabla^{\bot}\rho,\nabla^{k}\nabla^{\bot}(\Psi\partial_{x}\rho)>-<\nabla^{k}\nabla^{\bot}\partial_{t}\rho,\nabla^{k}\nabla^{\bot}\partial_{t}(\Psi\partial_{x}\rho)>\\\nonumber
\end{eqnarray}
as well as
\begin{eqnarray}\label{3.15}
&& \\\nonumber
&&G_3=<\nabla^{k}u,\nabla^{k}[(\nabla^{\bot}\rho\cdot\nabla)\nabla^{\bot}\rho]>-<\nabla^{k}\nabla^{\bot}\rho,\nabla^{k}\nabla^{\bot}[(u\cdot\nabla)\rho]>\\\nonumber
&&+<\nabla^{k}\partial_{t}u,\nabla^{k}\partial_{t}[(\nabla^{\bot}\rho\cdot\nabla)\nabla^{\bot}\rho]>-<\nabla^{k}\nabla^{\bot}\partial_{t}\rho,\nabla^{k}\nabla^{\bot}\partial_{t}[(u\cdot\nabla)\rho]>\\\nonumber
&&+<\nabla^{k}u,\nabla^{k}(\partial_{y}\Bar{\Phi}\partial_{xy}\rho,-\partial_{y}\Bar{\Phi}\partial_{x}^{2}\rho)^T>-<\nabla^{k}\nabla^{\bot}\rho,\nabla^{k}\nabla^{\bot}(u_{2}\partial_{y}\Bar{\Phi})>\\\nonumber
&&+<\nabla^{k}\partial_{t}u,\nabla^{k}\partial_{t}(\partial_{y}\Bar{\Phi}\partial_{xy}\rho,-\partial_{y}\Bar{\Phi}\partial_{x}^{2}\rho)^T>-<\nabla^{k}\nabla^{\bot}\partial_{t}\rho,\nabla^{k}\nabla^{\bot}\partial_{t}(u_{2}\partial_{y}\Bar{\Phi})>\\\nonumber
&&-<\nabla^{k}u,C_0\nabla^{k}(\partial_{y}^{2}\rho,-\partial_{xy}\rho)^T>-<\nabla^{k}\nabla^{\bot}\rho,\nabla^{k}\nabla^{\bot}(C_0u_{1})>\\\nonumber
&&-<\nabla^{k}\partial_{t}u,C_0\nabla^{k}\partial_{t}(\partial_{y}^{2}\rho,-\partial_{xy}\rho)^T>-<\nabla^{k}\nabla^{\bot}\partial_{t}\rho,\nabla^{k}
\nabla^{\bot}\partial_{t}(C_0u_{1})>\\\nonumber
&&=\sum_{i=1}^6 I_i.
\end{eqnarray}

During the calculation given above, we used the incompressible condition and get the cancellation for some quadratic terms as
\begin{eqnarray}\label{3.16}
&& \\\nonumber
&&-<u_{1},C_0\partial_{y}^{2}\rho>+<u_{2},C_0\partial_{xy}\rho>-<\nabla^{\bot}\rho,\nabla^{\bot}(C_0u_{1})>
\\\nonumber
&&= -<u_{1},C_0\partial_{y}^{2}\rho>-<\partial_y u_{2},C_0\partial_{x}\rho>-<\partial_y\rho,C_0\partial_y u_{1}>-<\partial_x\rho,C_0\partial_x u_{1}>\\\nonumber
&&=-<u_{1},C_0\partial_{y}^{2}\rho>+<\partial_xu_{1},C_0\partial_{x}\rho>+<u_{1},C_0\partial_{y}^{2}\rho>-<\partial_x\rho,C_0\partial_x u_{1}>\\\nonumber
&&=0,
\end{eqnarray}
and
\begin{equation}\label{3.17}
-<\partial_{t}u_{1},C_0\partial_{y}^{2}\partial_{t}\rho>-<\partial_{t}u_{2},C_0\partial_{xy}\partial_{t}\rho>-<\nabla^{\bot}\partial_{t}\rho,\nabla^{\bot}
\partial_{t}(C_0u_{1})>
=0.
\end{equation}

Noting that $k\geq 7$, therefore $G_1$ is a lower order term. It is easy to get that
\begin{equation}\label{3.18}
G_1\lesssim E_4^2(\|u\|_{H^3}+\|\nabla \rho\|_{H^3}+\|\Psi\|_{H^3}+\|\partial_{t}\Psi\|_{H^2}+\|\bar{\Phi}\|_{H^3}).
\end{equation}

By using Lemma 2.1, for the term $G_2$, we have
\begin{eqnarray}\label{3.19}
G_2&\lesssim& E_k^2(E_4+\|\bar{\Phi}\|_{H^{3}}+\|\Psi\|_{H^{3}}+\|\partial_{t}\Psi\|_{H^{2}})\\\nonumber
&&+(\|\bar{\Phi}\|_{H^{k+2}} +\|\Psi\|_{H^{k+1}} +\|\partial_{t}\Psi\|_{H^{k+1}} )E_kE_4\\\nonumber
&&+<\partial^{k}\partial_{t}u,\partial_{t}\Psi\partial^{k}\partial_{x}u>+<\partial^{k}\nabla^{\bot}\partial_{t}\rho,\partial_{t}\Psi
\partial^{k}\nabla^{\bot}\partial_{x}\rho>\\\nonumber
&&+<\partial^{k}\partial_{t}u,(\partial_{t}u\cdot\nabla)\partial^{k}u>.
\end{eqnarray}
Recalling \eqref{3.4}, we have
\begin{eqnarray}\label{3.20}
&&|<\partial^{k}\partial_{t}u,\partial_{t}\Psi\partial^{k}\partial_{x}u>+<\partial^{k}\nabla^{\bot}\partial_{t}\rho,\partial_{t}\Psi\partial^{k}\nabla^{\bot}\partial_{x}\rho>\\\nonumber
&&+<\partial^{k}\partial_{t}u,(\partial_{t}u\cdot\nabla)\partial^{k}u>|\\\nonumber
&&\lesssim\frac{1}{(1-\|\partial_{x}\rho\|_{L^{\infty}})^\frac{1}{2}}\|\partial_{t}u\|_{H^{k}}\|\partial_{t}\Psi\|_{L^{\infty}}[\int_{\Omega}(1+\partial_{x}\rho)|\partial^{k+1}u|^2)dxdy]^{\frac{1}{2}}\\\nonumber
&&+\frac{1}{(1-\|\partial_{x}\rho\|_{L^{\infty}})^\frac{1}{2}}\|\partial_{t}\rho\|_{H^{k+1}}\|\partial_{t}\Psi\|_{L^{\infty}}
[\int_{\Omega}(1+\partial_{x}\rho)|\nabla^{k+1}\nabla^{\bot}\rho|^2)dxdy]^{\frac{1}{2}}\\\nonumber
&&+\frac{1}{(1-\|\partial_{x}\rho\|_{L^{\infty}})^\frac{1}{2}}\|\partial_{t}u\|_{H^{k}}\|\partial_{t}u\|_{L^{\infty}}
[\int_{\Omega}(1+\partial_{x}\rho)|\nabla^{k+1}u|^2)dxdy]^{\frac{1}{2}}\\\nonumber
&&\lesssim\frac{\Gamma_{k+1}}{(1-\|\partial_{x}\rho\|_{L^{\infty}})^\frac{1}{2}}(\|\partial_{t}u\|_{H^{k}}+\|\partial_{t}\rho\|_{H^{k+1}})(\|\partial_{t}\Psi\|_{H^1}+\|\partial_{t}u\|_{H^2}).
\end{eqnarray}

From \eqref{3.19} and \eqref{3.20}, we have
\begin{equation}\label{3.21}
G_2\lesssim  (\Gamma_{k+1}^2+E_k^2)(E_4+\|\bar{\Phi}\|_{H^{k+2}}+\|\Psi\|_{H^{k+1}}+\|\partial_{t}\Psi\|_{H^{k+1}}).
\end{equation}

Next, for the term $G_3$,  we shall estimate $I_i, (1\leq i\leq 6)$ term by term.

For the term $I_{1}$, by using Lemma 2.1 and the incompressible condition,  we have:
\begin{eqnarray}\label{3.22}
&& \\\nonumber
&&\quad I_{1}\lesssim (\|u\|_{H^k}^2+\|\rho\|_{H^{k+1}}^2)(\|\nabla\rho\|_{H^3}+\|u\|_{H^3})\\\nonumber
&&+<\nabla^ku_1,\partial_y\rho\nabla^k\partial_{xy}\rho-\partial_x\rho\nabla^k\partial_{y}^{2}\rho>
-<\nabla^{k}u_{2},\partial_{y}\rho\nabla^{k}\partial_{x}^{2}\rho-\partial_{x}\rho\nabla^k\partial_{xy}\rho>\\\nonumber
&&-<\nabla^k\partial_{y}\rho,\nabla^k\partial_{y}u_1\partial_{x}\rho+\nabla^{k}\partial_{y}u_2\partial_{y}\rho>-<\nabla^{k}\partial_{x}\rho,\nabla^{k}\partial_{x}u_1
\partial_{x}\rho+\nabla^k\partial_{x}u_{2}\partial_{y}\rho>.
\end{eqnarray}
Due to $\nabla\cdot u=0$ and Lemma 2.1, we have
\begin{eqnarray}\label{3.23}
&&<\nabla^ku_1,\partial_y\rho\nabla^k\partial_{xy}\rho-\partial_x\rho\partial^k\partial_{y}^{2}\rho>-<\nabla^{k}u_{2},\partial_{y}\rho\nabla^{k}\partial_{x}^{2}\rho-\partial_{x}\rho\nabla^k\partial_{xy}\rho>\\\nonumber
&&-<\partial^{k}\nabla^{\bot}\rho,(\partial^{k}\nabla^{\bot}u\cdot\nabla)\rho>\\\nonumber
&&=-\int_{\Omega}\nabla^{k}\partial_{x}u_{1}\partial_{y}\rho\nabla^{k}\partial_{y}\rho\,dxdy-\int_{\Omega}\nabla^k\partial_{y}\rho\nabla^{k}\partial_{y}u_2\partial_{y}\rho\,dxdy\\\nonumber
&&+\int_{\Omega}\nabla^{k}\partial_{y}u_{1}\partial_{x}\rho\nabla^{k}\partial_{y}\rho\,dxdy-\int_{\Omega}\nabla^k\partial_{y}\rho\nabla^k\partial_{y}u_1\partial_{x}\rho\,dxdy\\\nonumber
&&+\int_{\Omega}\nabla^{k}\partial_{x}u_{2}\partial_{y}\rho\nabla^{k}\partial_{x}\rho\,dxdy-\int_{\Omega}\nabla^{k}\partial_{x}\rho\nabla^{k}\partial_{y}u_2\partial_{y}\rho\,dxdy\\\nonumber
&&-\int_{\Omega}\nabla^{k}\partial_{y}u_{2}\partial_{x}\rho\nabla^{k}\partial_{x}\rho\,dxdy-\int_{\Omega}\nabla^{k}\partial_{x}\rho\nabla^{k}\partial_{x}u_1\partial_{x}\rho\,dxdy\\\nonumber
&&-\int_{\Omega}\nabla^{k}u_{1}\partial_{xy}\rho\nabla^{k}\partial_{y}\rho\,dxdy+\int_{\Omega}\nabla^{k}u_{1}\partial_{xy}\rho\nabla^{k}\partial_{y}\rho\,dxdy\\\nonumber
&&+\int_{\Omega}\nabla^{k}u_{2}\partial_{xy}\rho\nabla^{k}\partial_{x}\rho\,dxdy-\int_{\Omega}\nabla^{k}u_{2}\partial_{xy}\rho\nabla^{k}\partial_{x}\rho\,dxdy=0.
\end{eqnarray}

Combining \eqref{3.22} with \eqref{3.23}, we get
\begin{equation}\label{3.24}
I_{1}\lesssim E_k^2E_4.
\end{equation}

As to the term $I_2$, we have
\begin{eqnarray}\label{3.25}
&&I_{2}=<\nabla^k\partial_{t}u,\nabla^k[(\partial_{t}\nabla^{\bot}\rho\cdot\nabla)\nabla^{\bot}\rho+(\nabla^{\bot}\rho\cdot\nabla)\nabla^{\bot}\partial_{t}\rho]>\\\nonumber
&&-<\nabla^k\nabla^{\bot}\partial_{t}\rho,\nabla^k[(\partial_{t}\nabla^{\bot}u\cdot\nabla)\rho+(\partial_{t}u\cdot\nabla)\nabla^{\bot}\rho\\\nonumber
&&+(\nabla^{\bot}u\cdot\nabla)\partial_{t}\rho+(u\cdot\nabla)\partial_{t}\nabla^{\bot}\rho]>\\\nonumber
&&\lesssim E_k^2E_4+<\nabla^k\partial_{t}u,(\partial_{t}\nabla^{\bot}\rho\cdot\nabla)\nabla^k\nabla^{\bot}\rho>\\\nonumber
&&-<\nabla^k\nabla^{\bot}\partial_{t}\rho,(\partial_{t}u\cdot\nabla)\nabla^k\nabla^{\bot}\rho+<\nabla^{k}\partial_{t}u,(\nabla^{\bot}\rho\cdot\nabla)\nabla^k\nabla^{\bot}\partial_{t}\rho>\\\nonumber
&&-<\nabla^{k}\nabla^{\bot}\partial_{t}\rho,(\nabla^k\partial_{t}\nabla^{\bot}u\cdot\nabla)\rho-<\nabla^k\nabla^{\bot}\partial_{t}
\rho,(\nabla^{k}\nabla^{\bot}u\cdot\nabla)\partial_{t}\rho>.
\end{eqnarray}

Similar to \eqref{3.16} and \eqref{3.23}, we have:
\begin{eqnarray}\label{3.26}
&&<\nabla^{k}\partial_{t}u,(\nabla^{\bot}\rho\cdot\nabla)\nabla^k\nabla^{\bot}\partial_{t}\rho>-<\nabla^{k}\nabla^{\bot}\partial_{t}\rho,(\nabla^k\partial_{t}\nabla^{\bot}u\cdot\nabla)\rho>\\\nonumber
&&=-\int_{\Omega}\nabla^{k}\partial_{t}\partial_{x}u_1\partial_{y}\rho\nabla^{k}\partial_{ty}\rho\,dxdy-\int_{\Omega}\nabla^{k}\partial_{t}\partial_{y}\rho\nabla^k\partial_{t}\partial_{y}u_{2}\partial_{y}\rho\,dxdy\\\nonumber
&&+\int_{\Omega}\nabla^{k}\partial_{t}\partial_{y}u_1\partial_{x}\rho\nabla^{k}\partial_{ty}\rho\,dxdy-\int_{\Omega}\nabla^k\partial_{t}\partial_{y}\rho\nabla^k\partial_{t}\partial_{y}u_{1}\partial_{x}\rho\,dxdy\\\nonumber
&&+\int_{\Omega}\nabla^{k}\partial_{t}\partial_{x}u_2\partial_{y}\rho\nabla^{k}\partial_{tx}\rho\,dxdy-\int_{\Omega}\nabla^k\partial_{t}\partial_{x}\rho\nabla^k\partial_{t}\partial_{x}u_{2}\partial_{y}\rho\,dxdy\\\nonumber
&&-\int_{\Omega}\nabla^{k}\partial_{t}\partial_{y}u_2\partial_{x}\rho\nabla^{k}\partial_{tx}\rho\,dxdy-\int_{\Omega}\nabla^k\partial_{t}\partial_{x}\rho\nabla^k\partial_{t}\partial_{x}u_{1}\partial_{x}\rho\,dxdy\\\nonumber
&&-\int_{\Omega}\nabla^{k}\partial_{t}u_1\partial_{xy}\rho\nabla^{k}\partial_{ty}\rho\,dxdy+\int_{\Omega}\nabla^{k}\partial_{t}u_1\partial_{xy}\rho\nabla^{k}\partial_{ty}\rho\,dxdy\\\nonumber
&&+\int_{\Omega}\nabla^{k}\partial_{t}u_2\partial_{xy}\rho\nabla^{k}\partial_{tx}\rho\,dxdy-\int_{\Omega}\nabla^{k}\partial_{t}u_2\partial_{xy}\rho\nabla^{k}\partial_{tx}\rho\,dxdy=0,
\end{eqnarray}
and
\begin{eqnarray}\label{3.27}
&&<\nabla^k\partial_{t}u,(\partial_{t}\nabla^{\bot}\rho\cdot\nabla)\nabla^k\nabla^{\bot}\rho>
+<\nabla^k\nabla^{\bot}\partial_{t}\rho,(\partial_{t}u\cdot\nabla)\nabla^k\nabla^{\bot}\rho>\\\nonumber
&&+<\nabla^k\nabla^{\bot}\partial_{t}\rho,(\nabla^{k}\nabla^{\bot}u\cdot\nabla)\partial_{t}\rho>\\\nonumber
&&\lesssim\frac{\|\partial_{t}u\|_{H^{k}}\|\nabla\partial_{t}\rho\|_{L^{\infty}}}{(1-\|\partial_{x}\rho\|_{L^{\infty}})^{\frac{1}{2}}}(\int_{\Omega}(1+\partial_{x}\rho)|\nabla^{k+1}\nabla^{\bot}\rho|^{2}dxdy)^{\frac{1}{2}}\\\nonumber
&&+\frac{\|\partial_{t}\rho\|_{H^{k+1}}\|\partial_{t}u\|_{L^{\infty}}}{(1-\|\partial_{x}\rho\|_{L^{\infty}})^{\frac{1}{2}}}(\int_{\Omega}(1+\partial_{x}\rho)|\nabla^{k+1}\nabla^{\bot}\rho|^{2}dxdy)^{\frac{1}{2}}\\\nonumber
&&+\frac{\|\partial_{t}\rho\|_{H^{k+1}}\|\partial_{t}\nabla\rho\|_{L^{\infty}}}{(1-\|\partial_{x}\rho\|_{L^{\infty}})^{\frac{1}{2}}}(\int_{\Omega}(1+\partial_{x}\rho)|\partial^{k+1}u|^{2}dxdy)^{\frac{1}{2}}.
\end{eqnarray}

From \eqref{3.25}-\eqref{3.27},
we have:
\begin{equation}\label{3.28}
I_{2}\lesssim E_k^2E_4+\frac{E_kE_4}{(1-\|\partial_{x}\rho\|_{L^{\infty}})^{\frac{1}{2}}}\Gamma_{k+1}\lesssim(E_k^2+\Gamma_{k+1}^2)E_4.
\end{equation}

For $I_{3}$, we have
\begin{eqnarray}\label{3.29}
&&I_{3}\lesssim  E_k^2 \|\bar{\Phi}\|_{H^{3}}+E_k\|\bar{\Phi}\|_{H^{k+2}} E_4\\\nonumber
&&+<\nabla^{k}u,(\partial_{y}\Phi\nabla^{k}\partial_{xy}\rho,-\partial_{y}\Phi\nabla^{k}\partial_{x}^{2}\rho)>
-<\nabla^{k}\nabla^{\bot}\rho,\nabla^{k}\nabla^{\bot}u_2\partial_{y}\Phi>.
\end{eqnarray}
Moreover, since  $\Bar{\Phi}$ is independent of  $x$,  we get
\begin{eqnarray}\label{3.30}
&&<\nabla^{k}u,(\partial_{y}\Phi\nabla^{k}\partial_{xy}\rho,-\partial_{y}\Phi\nabla^{k}\partial_{x}^{2}\rho)^T>-<\nabla^{k}\nabla^{\bot}\rho,\nabla^{k}\nabla^{\bot}u_2\partial_{y}\Phi>\\\nonumber
&&=\int_{\Omega}\nabla^ku_{1}\partial_{y}\Bar{\Phi}\nabla^k\partial_{xy}\rho\,dxdy-\int_{\Omega}\nabla^ku_{2}\partial_{y}\Bar{\Phi}\nabla^k\partial_{x}^{2}\rho\,dxdy\\\nonumber
&&-\int_{\Omega}\nabla^k\partial_{y}\rho\nabla^k\partial_{y}u_{2}\partial_{y}\Bar{\Phi}dxdy-\int_{\Omega}\nabla^k\partial_{x}\rho\nabla^k\partial_{x}u_{2}\partial_{y}\Bar{\Phi}dxdy\\\nonumber
&&=-\int_{\Omega}\nabla^{k}\partial_{x}u_1\partial_{y}\bar{\Phi}\nabla^{k}\partial_{y}\rho\,dxdy+\int_{\Omega}\nabla^{k}\partial_{x}u_2\partial_{y}\bar{\Phi}\nabla^{k}\partial_{x}\rho\,dxdy\\\nonumber
&&-\int_{\Omega}\nabla^k\partial_{y}\rho\nabla^k\partial_{y}u_{2}\partial_{y}\Bar{\Phi}dxdy-\int_{\Omega}\nabla^k\partial_{x}\rho\nabla^k\partial_{x}u_{2}\partial_{y}\Bar{\Phi}dxdy=0,
\end{eqnarray}
and then
\begin{equation}\label{3.31}
I_{3}\lesssim E_k^2\|\bar{\Phi}\|_{H^{k+2}}.
\end{equation}

Similar to \eqref{3.26} and \eqref{3.30}, we also have
\begin{equation}\label{3.32}
I_{5}=I_6=0.
\end{equation}

For the term $I_4$, we have
\begin{eqnarray}\label{3.33}
&&I_{4}
\lesssim E_k^2(E_4 +\|\bar{\Phi}\|_{H^3}+\|\Psi\|_{H^3})+E_k E_4(\|\bar{\Phi}\|_{H^{k+2}} +\| \Psi\|_{H^{k+2}} )\\\nonumber
&&+<\nabla^{k}\partial_{t}u,\partial_{y}\Phi\nabla^{k}\partial_{t}(\partial_{xy}\rho,-\partial_{x}^{2}\rho)^T>-<\nabla^k\nabla^{\bot}\partial_{t}\rho,\nabla^k\nabla^{\bot}\partial_{t}u_{2}\partial_{y}\Bar{\Phi}>\\\nonumber
&&-<\nabla^k\nabla^{\bot}\partial_{t}\rho,\nabla^{k}\nabla^{\bot}u_2\partial_{t}\partial_{y}\Phi>+<\nabla^{k}\partial_{t}u,\partial_{t}\partial_{y}\Phi\nabla^{k}(\partial_{xy}\rho,-\partial_{x}^{2}\rho)^T>
.
\end{eqnarray}

Again,  by the incompressible condition and integrating by parts, similar to \eqref{3.30},  we have
\begin{eqnarray}\label{3.34}
&&<\nabla^{k}\partial_{t}u,\partial_{y}\Phi\nabla^{k}\partial_{t}(\partial_{xy}\rho,-\partial_{x}^{2}\rho)^T>-<\nabla^k\nabla^{\bot}\partial_{t}\rho,\nabla^k\nabla^{\bot}\partial_{t}u_{2}\partial_{y}\Bar{\Phi}>\\\nonumber
&&=\int_{\Omega}\partial_{y}\Bar{\Phi}\nabla^k\partial_{t}\partial_{xy}\rho\nabla^k\partial_{t}u_{1}\,dxdy-\int_{\Omega}\partial_{y}\Bar{\Phi}\nabla^k\partial_{t}\partial_{x}^{2}\rho\nabla^k\partial_{t}u_{2}\,dxdy\\\nonumber
&&-\int_{\Omega}\nabla^k\partial_{ty}u_{2}\partial_{y}\Bar{\Phi}\nabla^k\partial_{yt}\rho\,dxdy-\int_{\Omega}\nabla^k\partial_{tx}u_{2}\partial_{y}\Bar{\Phi}\nabla^k\partial_{xt}\rho\,dxdy=0.
\end{eqnarray}
Next recalling \eqref{3.4},  we have
\begin{eqnarray}\label{3.35}
&&<\nabla^{k}\partial_{t}u_1,\partial_{t}\partial_{y}\Phi\nabla^{k}\partial_{xy}\rho>-
<\nabla^k\nabla^{\bot}\partial_{t}\rho,\nabla^{k}\nabla^{\bot}u_2\partial_{t}\partial_{y}\Phi>\\\nonumber
&&-<\nabla^{k}\partial_{t}u_2,\partial_{t}\partial_{y}\Phi\nabla^{k}\partial_{x}^{2}\rho)>\\\nonumber
&&\lesssim\frac{\|\partial_{t}u\|_{H^{k}}\|\partial_{t}\Phi\|_{H^{2}}}{(1-\|\partial_{x}\rho\|_{L^{\infty}})^{\frac{1}{2}}}(\int_{\Omega}(1+\partial_{x}\rho)|\nabla^{k+1}\nabla^{\bot}\rho|^{2}dxdy)^{\frac{1}{2}}\\\nonumber
&&\\\nonumber
&&+\frac{\|\partial_{t}\rho\|_{H^{k+1}}\|\partial_{t}\Phi\|_{H^{2}}}{(1-\|\partial_{x}\rho\|_{L^{\infty}})^{\frac{1}{2}}}
(\int_{\Omega}(1+\partial_{x}\rho)|\partial^{k+1}u|^{2}dxdy)^{\frac{1}{2}}.
\end{eqnarray}

Then from \eqref{3.33}-\eqref{3.35}, we get
\begin{eqnarray}\label{3.36}
&&I_{4}\lesssim(E_k^2+\Gamma_{k+1}^2)(E_4 +\|\bar{\Phi}\|_{H^{k+2}}+\|\Psi\|_{H^{k+2}}).
\end{eqnarray}

Combing \eqref{3.24}, \eqref{3.28}, \eqref{3.31}, \eqref{3.32} with \eqref{3.36}, we get
\begin{equation}\label{3.37}
G_3\lesssim(E_k^2+\Gamma_{k+1}^2)(E_4 +\|\bar{\Phi}\|_{H^{k+2}}+\|\Psi\|_{H^{k+2}}).
\end{equation}

\end{proof}

\begin{proposition}
Under the assumptions of Proposition 3.2, we have  the following weighted  estimates:
\begin{eqnarray}\label{3.38}
&&\frac{d}{dt} \Gamma_{k+1}^2+\kappa\int_{\Omega}(1+\partial_{x}\rho)|\partial^{k+1}u|^2dxdy\\\nonumber
&&\lesssim(E_k^2+\Gamma_{k+1}^2)(E_4+E_4^2+\|\bar{\Phi}\|_{H^{k+3}}+\|\Psi\|_{H^{k+3}}).
\end{eqnarray}
\end{proposition}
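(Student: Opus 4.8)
The plan is to run the same kind of energy argument as in Proposition 3.3, but one spatial order higher, at level $\nabla^{k+1}$, and with the transported weight $1+\partial_x\rho$ inserted from the start. Concretely, I apply $\nabla^{k+1}$ to the velocity equation \eqref{1.7} and $\nabla^{k+1}\nabla^{\bot}$ to the magnetic equation \eqref{1.6}, pair the first with $(1+\partial_x\rho)\nabla^{k+1}u$ and the second with $(1+\partial_x\rho)\nabla^{k+1}\nabla^{\bot}\rho$ in $L^2(\Omega)$, and add. On the left this produces $\frac{d}{dt}\Gamma_{k+1}^2$, the damping term $\kappa\int_{\Omega}(1+\partial_x\rho)|\partial^{k+1}u|^2\,dxdy$ (which we keep, the $\rho$-equation carrying no damping, in agreement with the statement), and the weight-derivative error $-\tfrac12\int_{\Omega}\partial_t\partial_x\rho\,(|\nabla^{k+1}u|^2+|\nabla^{k+1}\nabla^{\bot}\rho|^2)\,dxdy$, which by $\|\partial_t\partial_x\rho\|_{L^\infty}\lesssim\|\partial_t\rho\|_{H^{4}}\lesssim E_4$ is $\lesssim E_4\,\Gamma_{k+1}^2$. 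Throughout, every boundary integral created by an integration by parts vanishes because of the compatibility conditions \eqref{2.34} and the definition of $X^k,Y^k$, and whenever an undressed norm $\|\nabla^{k+1}u\|_{L^2}$ or $\|\nabla^{k+1}\nabla^{\bot}\rho\|_{L^2}$ occurs I use $\|\nabla^{k+1}u\|_{L^2}^2+\|\nabla^{k+1}\nabla^{\bot}\rho\|_{L^2}^2\lesssim\Gamma_{k+1}^2$, legitimate under \eqref{3.4}.

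The right-hand side then splits into three groups, parallel to $G_1,G_2,G_3$. The pure advection pieces $(u\cdot\nabla)\nabla^{k+1}u$, $\Psi\partial_x\nabla^{k+1}u$ and their $\nabla^{\bot}\rho$-counterparts are handled by integration by parts: $\langle(1+\partial_x\rho)\nabla^{k+1}u,(u\cdot\nabla)\nabla^{k+1}u\rangle=-\tfrac12\int_{\Omega}(u\cdot\nabla\partial_x\rho)\,|\nabla^{k+1}u|^2\,dxdy$ by $\nabla\cdot u=0$, hence $\lesssim E_4\,\Gamma_{k+1}^2$; the $\Psi\partial_x$ terms leave only the factor $\Psi\,\partial_x^2\rho$ because $\partial_x\Psi=0$, so $\lesssim\|\Psi\|_{H^2}E_4\,\Gamma_{k+1}^2$. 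The low-order and background-forced contributions --- $(u_2\partial_y\Psi,0)$, $\partial_x\rho\,\partial_y^2\bar\Phi$, the sub-top-order parts of $(u\cdot\nabla)u$, $\Psi\partial_x u$, $(\nabla^{\bot}\rho\cdot\nabla)\nabla^{\bot}\rho$, etc. --- are estimated by Lemma 2.1 exactly as $G_1,G_2$ were, yielding terms of the form $(E_k^2+\Gamma_{k+1}^2)(E_4+\|\Psi\|_{H^{k+2}}+\|\bar\Phi\|_{H^{k+2}})$.

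The heart of the proof, and the step I expect to be the main obstacle, is the top-order coupling: one must recover at level $\nabla^{k+1}$, and in the presence of the weight, the algebraic cancellations already used in Proposition 3.3 --- the constant-background identity \eqref{3.16} between $-C_0\langle(1+\partial_x\rho)\nabla^{k+1}u,\nabla^{k+1}(\partial_y^2\rho,-\partial_{xy}\rho)^T\rangle$ and $-\langle(1+\partial_x\rho)\nabla^{k+1}\nabla^{\bot}\rho,\nabla^{k+1}\nabla^{\bot}(C_0u_1)\rangle$, the variable-background identity \eqref{3.30} for the $\partial_y\bar\Phi$-coupling, and the $\partial_t$-free analogues of \eqref{3.23}/\eqref{3.26} for $(\nabla^{\bot}\rho\cdot\nabla)\nabla^{\bot}\rho$ against $(u\cdot\nabla)\rho$. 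With the weight present these identities hold only modulo commutators in which a derivative lands on $1+\partial_x\rho$, producing a factor $\partial_x^2\rho$ or $\partial_{xy}\rho$; such factors are absorbed via $\|\partial_x^2\rho\|_{L^\infty}+\|\partial_{xy}\rho\|_{L^\infty}\lesssim\|\rho\|_{H^4}\lesssim E_4$ (2D Sobolev embedding), giving $\lesssim E_4\,\Gamma_{k+1}^2$, and when such a commutator carries the extra low-order factor $\nabla\rho$ or $u$ coming from the magnetic or advective nonlinearity it costs $E_4^2\,\Gamma_{k+1}^2$ instead --- this, together with the $(u\cdot\nabla)u$-commutator above, is the origin of the $E_4^2$ in \eqref{3.38}. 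Finally, the genuinely top-order pieces of the $\bar\Phi$- and $\Psi$-couplings, where all $k+2$ derivatives fall on the background --- typically $\langle(1+\partial_x\rho)\nabla^{k+1}u,(\nabla^{k+1}\partial_y\bar\Phi)\,\partial_{xy}\rho\rangle$ coming from $\partial_x\rho\,\partial_y^2\bar\Phi$, and the analogous term produced by $\nabla^{k+1}\nabla^{\bot}(\Psi\partial_x\rho)$ --- are bounded by $\|\bar\Phi\|_{H^{k+3}}\|\partial_{xy}\rho\|_{L^\infty}\|\nabla^{k+1}u\|_{L^2}\lesssim\|\bar\Phi\|_{H^{k+3}}(E_4^2+\Gamma_{k+1}^2)$ and $\|\Psi\|_{H^{k+3}}(E_4^2+\Gamma_{k+1}^2)$; since $E_4\lesssim E_k$ these have the asserted form, and this is exactly why the background must be controlled at order $k+3$ here, one higher than in Proposition 3.3. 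Collecting the residual errors from the three groups gives \eqref{3.38}.
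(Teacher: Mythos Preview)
Your outline follows the paper's approach closely --- compute $\tfrac{d}{dt}\Gamma_{k+1}^2$ from the weighted pairings, keep the damping, control the weight-derivative error, and recover the top-order cancellations (your three bullet points correspond exactly to the paper's terms $I_9$, $I_7$, $I_8$) modulo commutators where a derivative hits $1+\partial_x\rho$. Your identification of the $E_4^2$ source and of the need for $\|\bar\Phi\|_{H^{k+3}},\|\Psi\|_{H^{k+3}}$ is also on target.

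There is, however, one genuine omission: the pressure. In the unweighted estimate of Proposition~3.2 the term $\langle\nabla^{k}u,\nabla^{k}\nabla P\rangle$ vanishes by incompressibility, and you seem to be implicitly assuming the same here. With the weight present it does \emph{not} vanish: integration by parts leaves
\[
\int_{\Omega}(1+\partial_x\rho)\,\nabla^{k+1}u\cdot\nabla^{k+1}\nabla P\,dxdy
= -\int_{\Omega}(\nabla\partial_x\rho\cdot\nabla^{k+1}u)\,\nabla^{k+1}P\,dxdy,
\]
and to close you must control $\|\nabla P\|_{\dot H^{k}}$. The paper does this explicitly (see \eqref{3.51}--(3.53)): one writes $P=-\Delta^{-1}\nabla\cdot\big[(u\cdot\nabla)u+(u_2\partial_y\Psi,0)^T+\Psi\partial_x u-(\nabla^{\bot}\rho\cdot\nabla)\nabla^{\bot}\rho\big]$ from the incompressible constraint, and then Lemma~2.1 gives $\|\nabla P\|_{\dot H^k}\lesssim E_4E_k+E_4\Gamma_{k+1}+\|\Psi\|_{H^{k+1}}E_k$, so that the residual pressure contribution is bounded by $E_4\Gamma_{k+1}(E_k+\Gamma_{k+1})(E_4+\|\Psi\|_{H^{k+1}})$, of the required form. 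Without this step your argument has a gap at the only point where the weight genuinely interacts with the divergence-free structure; you should add it.
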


\begin{proof}
Due to the equation \eqref{1.7}, we have
\begin{eqnarray}\label{3.39}
&& \\\nonumber
&&\frac{1}{2}\frac{d}{dt}\int_{\Omega}|\partial^{k+1}u|^{2}(1+\partial_{x}\rho)dxdy
\\\nonumber
&&=\int_{\Omega}(1+\partial_{x}\rho)\partial^{k+1}u\cdot\partial^{k+1}[-(u\cdot\nabla)u-(u_{2}\partial_{y}\Psi,0)^T-\kappa{u}
\\\nonumber
&&
-\Psi\partial_{x}u-\nabla{P}+(\nabla^{\bot}\rho\cdot\nabla)\nabla^{\bot}\rho-(\partial_{x}\rho\partial_{y}^{2}\Bar{\Phi},0)^T
\\\nonumber
&&
+(\partial_{y}\Bar{\Phi}\partial_{xy}\rho,-\partial_{y}\Bar{\Phi}\partial_{x}^{2}\rho)^T-(C_0\partial_{y}^{2}\rho,-C_0\partial_{xy}\rho)^T]dxdy
+\frac{1}{2}\int_{\Omega}|\partial^{k+1}u|^{2}\partial_{y}\partial_{t}\rho\,dxdy.
\end{eqnarray}
Similarly, we also have:
\begin{eqnarray}\label{3.40}
&& \\\nonumber
&&\frac{1}{2}\frac{d}{dt}\int_{\Omega}|\partial^{k+1}\nabla^{\bot}\rho|^{2}(1+\partial_{x}\rho)dxdy
\\\nonumber
&&
=\int_{\Omega}(1+\partial_{x}\rho)\partial^{k+1}\nabla^{\bot}\rho\cdot \partial^{k+1}\nabla^{\bot}(-u_{2}\partial_{y}\Bar{\Phi}-C_0u_{1}-(u\cdot\nabla)\rho
-\Psi\partial_{x}\rho)dxdy\\\nonumber
&&+\frac{1}{2}\int_{\Omega}|\partial^{k+1}\nabla^{\bot}\rho|^2\partial_{xt}\rho\,dxdy.
\end{eqnarray}
Then \eqref{3.39}-\eqref{3.40} imply that
\begin{eqnarray}\label{3.41}
&&\frac{1}{2}\frac{d}{dt}\Gamma_{k+1}^2 +\kappa\int_{\Omega}(1+\partial_{x}\rho)|\partial^{k+1}u|^2dxdy
\\\nonumber
&&
\lesssim \int_{\Omega}(1+\partial_{x}\rho)\partial^{k+1}u\cdot\partial^{k+1}[-(u\cdot\nabla)u-(u_{2}\partial_{y}\Psi,0)^T
\\\nonumber
&&
-\Psi\partial_{x}u-(\partial_{x}\rho\partial_{y}^{2}\Bar{\Phi},0)^T]dxdy
\\\nonumber
&&
+\int_{\Omega}(1+\partial_{x}\rho)\partial^{k+1}\nabla^{\bot}\rho\cdot \partial^{k+1}\nabla^{\bot}(
-\Psi\partial_{x}\rho)dxdy\\\nonumber
&&
-\int_{\Omega}(1+\partial_{x}\rho)\partial^{k+1}u\cdot\nabla\partial^{k+1}{P}dxdy+\frac{\|\partial_{tx}\rho\|_{L^{\infty}(\Omega)}}{1-\|\partial_{x}\rho\|_{L^{\infty}}}\Gamma_{k+1}^2+\sum_{i=7}^9 I_i,
\end{eqnarray}
with
\begin{align}
&I_7=\int_{\Omega}(1+\partial_{x}\rho)\partial^{k+1}u\cdot\partial^{k+1}(\partial_{y}\Bar{\Phi}\partial_{xy}\rho,-\partial_{y}\Bar{\Phi}\partial_{x}^{2}\rho)^Tdxdy\\\nonumber
&-\int_{\Omega}(1+\partial_{x}\rho)
\partial^{k+1}\nabla^{\bot}\rho\cdot\partial^{k+1}\nabla^{\bot}(u_{2}\partial_{y}\Bar{\Phi})]dxdy,\\
&I_8=\int_{\Omega}(1+\partial_{x}\rho)\partial^{k+1}u\cdot\partial^{k+1}
[(\nabla^{\bot}\rho\cdot\nabla)\nabla^{\bot}\rho]dxdy\\\nonumber
&-\int_{\Omega}(1+\partial_{x}\rho)
\partial^{k+1}\nabla^{\bot}\rho\cdot\partial^{k+1}\nabla^{\bot}[(u\cdot\nabla)\rho]\,dxdy,\\
&I_9=C_0\int_{\Omega}(1+\partial_{x}\rho)\partial^{k+1}u\cdot\partial^{k+1}
(-\partial_{y}^{2}\rho,\partial_{xy}\rho)dxdy\\\nonumber
&-C_0\int_{\Omega}(1+\partial_{x}\rho)\partial^{k+1}\nabla^{\bot}\rho\cdot\partial^{k+1}\nabla^{\bot}u_{1}dxdy.
\end{align}

Now we estimate the R.H.S. of \eqref{3.41} term by term.
\begin{eqnarray}\label{3.45}
&&|\int_{\Omega}(1+\partial_{x}\rho)\partial^{k+1}u\cdot\partial^{k+1}[(u\cdot\nabla)u]dxdy|\\\nonumber
&&\lesssim
\Gamma_{k+1}E_kE_4+ |\int_{\Omega}(1+\partial_{x}\rho)\partial^{k+1}u\cdot(\partial^{k+1}u\cdot\nabla)udxdy|\\\nonumber
&&+|\int_{\Omega}(1+\partial_{x}\rho)\partial^{k+1}u\cdot(u\cdot\nabla)\partial^{k+1}udxdy|\lesssim (\Gamma_{k+1}^2+E_{k}^2)E_4.
\end{eqnarray}

Similarly, we have
\begin{eqnarray}\label{3.46}
&&|\int_{\Omega}(1+\partial_{x}\rho)\partial^{k+1}u_{1}\cdot\partial^{k+1}(u_{2}\partial_{y}\Psi)dxdy|\\\nonumber
&&\lesssim |\int_{\Omega}(1+\partial_{x}\rho)\partial^{k+1}u_{1}\cdot\partial^{k+1}u_{2}\partial_{y}\Psi\,dxdy|\\\nonumber
&&+|\int_{\Omega}(1+\partial_{x}\rho)\partial^{k+1}u_{1}\cdot{u}_{2}\partial^{k+1}\partial_{y}\Psi\,dxdy|+(E_k^2+\Gamma_{k+1}^2) \|\Psi\|_{H^{3}}
\\\nonumber
&&\lesssim(E_k^2+\Gamma_{k+1}^2)(\|\Psi\|_{H^{k+2}}+E_4)(1+\|\partial_x\rho\|_{H^2})^{\frac12}.
\end{eqnarray}

Integrating by parts, we have:
\begin{eqnarray}
&&|\int_{\Omega}(1+\partial_{x}\rho)\partial^{k+1}\nabla^{\bot}\rho\cdot \Psi\partial^{k+1}\nabla^{\bot}\partial_{x}{\rho}dxdy|\\\nonumber
&&=|\int_{\Omega}\partial_{x}^{2}\rho\Psi(\partial^{k+1}\nabla^{\bot}\rho)^{2}dxdy|\lesssim\frac{\|\rho\|_{H^{4}}\|\Psi\|_{H^{2}}}
{1-\|\partial_{x}\rho\|_{L^{\infty}}}\Gamma_{k+1}^2,
\end{eqnarray}
therefore we have
\begin{eqnarray}
&&|\int_{\Omega}(1+\partial_{x}\rho)\partial^{k+1}\nabla^{\bot}\rho\cdot \partial^{k+1}\nabla^{\bot}(-\Psi\partial_{x}\rho)dxdy|\\\nonumber
&&\lesssim\Gamma_{k+1}(\|\Psi\|_{H^{k+2}}\|\rho\|_{H^4}+\|\Psi\|_{H^3}\|\rho\|_{H^{k+1}}+\Gamma_{k+1}\|\Psi\|_{H^2})\\\nonumber
&&+|\int_{\Omega}(1+\partial_{x}\rho)\partial^{k+1}\nabla^{\bot}\rho\cdot \Psi\partial^{k+1}\nabla^{\bot}\partial_{x}{\rho}dxdy|.
\end{eqnarray}

Similarly, we have
\begin{eqnarray}
&& \\\nonumber
&&\int_{\Omega}(1+\partial_{x}\rho)\partial^{k+1}\nabla^{\bot}\rho\cdot\partial^{k+1}\nabla^{\bot}(-\Psi\partial_{x}\rho)dxdy\\\nonumber
&&\lesssim\Gamma_{k+1}[\|\Psi\|_{H^{k+2}}\|\nabla\rho\|_{H^3}+\|\Psi\|_{H^3}\|\rho\|_{H^{k+1}}+\Gamma_{k+1}(\|\Psi\|_{H^2}+\|\rho\|_{H^4}\|\Psi\|_{H^2})],
\end{eqnarray}
and
\begin{eqnarray}
&&|\int_{\Omega}(1+\partial_{x}\rho)\partial^{k+1}u\cdot \partial^{k+1}(\Psi\partial_{x}u)-(1+\partial_{x}\rho)\partial^{k+1}u_1\partial^{k+1}(\partial_{x}\rho\partial_{y}^{2}\Bar{\Phi})dxdy\\\nonumber
&&
\lesssim (\|u\|_{H^k}\|\Psi\|_{H^3}+\|\rho\|_{H^{k+1}}\|\bar{\Phi}\|_{H^4}+\|\bar{\Phi}\|_{H^{k+3}}\|\rho\|_{H^3})\Gamma_{k+1}\\\nonumber
&&+\int_{\Omega}(1+\partial_{x}\rho)\partial^{k+1}u\cdot \big( \partial^{k+1}\Psi\partial_{x}u+\Psi\partial^{k+1}\partial_{x}u\big)dxdy\\\nonumber
&&-\int_{\Omega}(1+\partial_{x}\rho)\partial^{k+1}u_1\partial^{k+1}\partial_{x}\rho\partial_{y}^{2}\Bar{\Phi}dxdy\\\nonumber
&&\lesssim\Gamma_{k+1}(\|\Psi\|_{H^{k+1}}\|u\|_{H^3}+\|u\|_{H^{k}}\|\Psi\|_{H^3}+\Gamma_{k+1}\|\Psi\|_{H^2}\|\rho\|_{H^4})\\\nonumber
&&+\Gamma_{k+1}(\Gamma_{k+1}\|\Phi\|_{H^{3}}+\|\Phi\|_{H^{k+3}}\|\nabla\rho\|_{H^2}+\|\rho\|_{H^{k+1}}\|\bar{\Phi}\|_{H^{4}})\\\nonumber
&&
\lesssim(\Gamma_{k+1}^2+E_k^2)(E_4+\|\Psi\|_{H^{k+1}}+\|\bar{\Phi}\|_{H^{k+3}}).
\end{eqnarray}

By using the equation \eqref{1.7} and the incompressible condition, we get
\begin{equation}\label{3.51}
P=-\Delta^{-1}\nabla\cdot[(u\cdot\nabla)u+(u_2\partial_{y}\Psi,0)^T+\Psi\partial_xu-(\nabla^\bot\rho\cdot)\nabla^\bot\rho]
.\end{equation}
Then by using Lemma 2.1, we have
\begin{eqnarray}
&&\|\nabla P\|_{\dot{H}^k}\leq \|(u\cdot\nabla)u+u_2\partial_{y}\Psi+\Psi\partial_xu-(\nabla^\bot\rho\cdot\nabla)\nabla^\bot\rho\|_{\dot{H}^k}
\\\nonumber
&&
\lesssim(\|u\|_{H^k}+\|\rho\|_{H^{k+1}})E_4+\|\Psi\|_{H^{k+1}}E_k
\\\nonumber
&&
+\frac{\|\nabla\rho\|_{L^\infty}}{\sqrt{1-\|\partial_x\rho\|_{L^\infty}}}\|\sqrt{1+\partial_x\rho}\nabla^{k+1}\nabla^\bot \rho\|_{L^2}
\\\nonumber
&&
\lesssim  E_4E_k+E_4\Gamma_{k+1} +\|\Psi\|_{H^{k+1}}E_k.
\end{eqnarray}
Thus, we get
\begin{eqnarray}
&&|\int_{\Omega}(1+\partial_{x}\rho)\partial^{k+1}u\cdot\nabla^{k+1}\nabla{P}dxdy|=|\int_{\Omega}\nabla\partial_{x}\rho\partial^{k+1}u\cdot\nabla^k\nabla{P}dxdy|\\\nonumber
&&\lesssim
\|\nabla\rho\|_{H^3}\frac{1}{\sqrt{1-\|\partial_x\rho\|_{L^\infty}}}\|\sqrt{1+\partial_x\rho} \nabla^{k+1}u\|_{L^2}\|\nabla P\|_{H^k}
\\\nonumber
&&
\lesssim E_4\Gamma_{k+1}(E_k+\Gamma_{k+1})(E_4+\|\Phi\|_{H^{k+1}}+\|\Psi\|_{H^{k+1}}).
\end{eqnarray}

As to the term  $I_{7}$, there holds
\begin{eqnarray}
&& \\\nonumber
&&I_{7}\lesssim\Gamma_{k+1}(\|\Phi\|_{H^{k+2}}\|\rho\|_{H^{4}}+\|\Phi\|_{H^{k+3}}\|u\|_{H^{3}}+\|u\|_{H^{k}}\|\Phi\|_{H^{4}}+\|\Phi\|_{H^{4}}\|\rho\|_{H^{k+1}})\\\nonumber
&&+\Gamma_{k+1}^2\|\Phi\|_{H^{3}}+|\int_{\Omega}(1+\partial_{x}\rho)\partial^{k+1}u\cdot(\partial_{y}\Bar{\Phi}\partial^{k+1}\partial_{xy}\rho,-\partial_{y}\Bar{\Phi}\partial^{k+1}\partial_{x}^{2}\rho)^Tdxdy\\\nonumber
&&-\int_{\Omega}(1+\partial_{x}\rho)\partial^{k+1}\nabla^{\bot}\rho\cdot(\partial^{k+1}\nabla^{\bot}u_{2}\partial_{y}\Bar{\Phi})dxdy|.
\end{eqnarray}
Integrating by parts, we have
\begin{eqnarray}\label{3.55}
&& \\\nonumber
&&\int_{\Omega}(1+\partial_{x}\rho)\partial^{k+1}u\cdot(\partial_{y}\Bar{\Phi}\partial^{k+1}\partial_{xy}\rho,-\partial_{y}\Bar{\Phi}\partial^{k+1}\partial_{x}^{2}\rho)^Tdxdy\\\nonumber
&&-\int_{\Omega}(1+\partial_{x}\rho)\partial^{k+1}\nabla^{\bot}\rho\cdot(\partial^{k+1}\nabla^{\bot}u_{2}\partial_{y}\Bar{\Phi})dxdy\\\nonumber
&&=-\int_{\Omega}(1+\partial_{x}\rho)\partial^{k+1}\partial_{x}u_{1}\partial_{y}\Bar{\Phi}\partial^{k+1}\partial_{y}{\rho}dxdy-\int_{\Omega}\partial_{x}^{2}\rho\partial^{k+1}u_{1}\partial_{y}\bar{\Phi}\partial^{k+1}\partial_{y}{\rho}dxdy
\\\nonumber
&&+\int_{\Omega}(1+\partial_{x}\rho)\partial^{k+1}\partial_{x}u_{2}\partial_{y}\Bar{\Phi}\partial^{k+1}\partial_{x}{\rho}dxdy+\int_{\Omega}\partial_{x}^{2}\rho\partial^{k+1}u_{2}\partial_{y}\bar{\Phi}\partial^{k+1}\partial_{x}{\rho}dxdy\\\nonumber
&&-\int_{\Omega}(1+\partial_{x}\rho)\partial^{k+1}\partial_{y}\rho\partial^{k+1}\partial_{y}u_{2}\partial_{y}{\Bar{\Phi}}dxdy-\int_{\Omega}(1+\partial_{x}\rho)\partial^{k+1}\partial_{x}\rho\partial^{k+1}\partial_{x}u_{2}\partial_{y}{\Bar{\Phi}}dxdy\\\nonumber
&&=\int_{\Omega}\partial_{x}^{2}\rho\partial^{k+1}u_{2}\partial^{k+1}\partial_{x}{\rho}dxdy-\int_{\Omega}\partial_{x}^{2}\rho\partial^{k+1}u_{1}\partial^{k+1}\partial_{y}{\rho}dxdy
\lesssim\frac{\|\rho\|_{H^{4}}}{1-\|\partial_{x}\rho\|_{L^{\infty}}}\Gamma_{k+1}^2,
\end{eqnarray}
which implies that
\begin{eqnarray}
&& \\\nonumber
&&I_{7}\lesssim\Gamma_{k+1}(\|\Phi\|_{H^{k+2}}\|\rho\|_{H^{4}}+\|\Phi\|_{H^{k+3}}\|u\|_{H^{3}}+\|u\|_{H^{k}}\|\Phi\|_{H^{4}}+\|\Phi\|_{H^{4}}\|\rho\|_{H^{k+1}})\\\nonumber
&&+\Gamma_{k+1}^2(\|\Phi\|_{H^{3}}+\frac{\|\rho\|_{H^{4}}}{1-\|\partial_{x}\rho\|_{L^{\infty}}}).
\end{eqnarray}

Similar to \eqref{3.55}, we get the bound for $I_9$
\begin{eqnarray}
&& \\\nonumber
&&I_{9}=-C_0\int_{\Omega}(1+\partial_{x}\rho)\partial^{k+1}u_{1}\partial^{k+1}\partial_{y}^{2}{\rho}dxdy+C_0\int_{\Omega}(1+\partial_{x}\rho)\partial^{k+1}u_{2}\partial^{k+1}\partial_{xy}{\rho}dxdy\\\nonumber
&&-C_0\int_{\Omega}(1+\partial_{x}\rho)\partial^{k+1}\nabla^{\bot}\rho\cdot\partial^{k+1}\nabla^{\bot}u_{1}dxdy\\\nonumber
&&=C_0\int_{\Omega}(1+\partial_{x}\rho)\partial^{k+1}\partial_{y}u_{1}\partial^{k+1}\partial_{y}{\rho}dxdy+C_0\int_{\Omega}\partial_{xy}\rho\partial^{k+1}u_{1}\partial^{k+1}\partial_{y}{\rho}dxdy\\\nonumber
&&-C_0\int_{\Omega}(1+\partial_{x}\rho)\partial^{k+1}\partial_{y}u_{2}\partial^{k+1}\partial_{x}{\rho}dxdy-C_0\int_{\Omega}\partial_{xy}\rho\partial^{k+1}u_{2}\partial^{k+1}\partial_{x}{\rho}dxdy\\\nonumber
&&-C_0\int_{\Omega}(1+\partial_{x}\rho)\partial^{k+1}\partial_{y}\rho\partial^{k+1}\partial_{y}{u_{1}}dxdy-C_0\int_{\Omega}(1+\partial_{x}\rho)\partial^{k+1}\partial_{x}\rho\partial^{k+1}\partial_{x}{u_{1}}dxdy\\\nonumber
&&=C_0\int_{\Omega}\partial_{xy}\rho\partial^{k+1}u_{1}\partial^{k+1}\partial_{y}{\rho}dxdy-C_0\int_{\Omega}\partial_{xy}\rho\partial^{k+1}u_{2}\partial^{k+1}\partial_{x}{\rho}dxdy\\\nonumber
&&\lesssim\frac{\|\rho\|_{H^{4}}}{1-\|\partial_{x}\rho\|_{L^{\infty}}}\Gamma_{k+1}^2.
\end{eqnarray}

Now, let's turn to the term  $I_{8}$:
\begin{eqnarray}\label{3.58}
&&I_{8}\lesssim E_4\Gamma_{k+1}^2+\Gamma_{k+1}(\|\rho\|_{H^{k+1}}\|\nabla\rho\|_{H^4}+\|u\|_{H^{k}}\|\nabla\rho\|_{H^4}+\|\rho\|_{H^{k+1}}\|u\|_{H^{4}})\\\nonumber
&&+|\int_{\Omega}(1+\partial_{x}\rho)\partial^{k+1}u\cdot[(\nabla^{\bot}\rho\cdot\nabla)\partial^{k+1}\nabla^{\bot}\rho]dxdy\\\nonumber
&&-\int_{\Omega}(1+\partial_{x}\rho)\partial^{k+1}\nabla^{\bot}\rho\cdot(\partial^{k+1}\nabla^{\bot}u\cdot\nabla){\rho}dxdy|.
\end{eqnarray}
Integrating by part, we get
\begin{eqnarray}\label{3.59}
&&\int_{\Omega}(1+\partial_{x}\rho)\partial^{k+1}u\cdot[(\nabla^{\bot}\rho\cdot\nabla)\partial^{k+1}\nabla^{\bot}\rho]dxdy\\\nonumber
&&-\int_{\Omega}(1+\partial_{x}\rho)\partial^{k+1}\nabla^{\bot}\rho\cdot (\partial^{k+1}\nabla^{\bot}u\cdot\nabla){\rho}dxdy\\\nonumber
&&=\int_{\Omega}(1+\partial_{x}\rho)\partial^{k+1}u_{1}(\partial_{y}\rho\partial^{k+1}\partial_{xy}\rho-\partial_{x}\rho\partial^{k+1}\partial_{y}^{2}\rho)dxdy\\\nonumber
&&+\int_{\Omega}(1+\partial_{x}\rho)\partial^{k+1}u_{2}(\partial_{x}\rho\partial^{k+1}\partial_{xy}\rho-\partial_{y}\rho\partial^{k+1}\partial_{x}^{2}\rho)\\\nonumber
&&-\int_{\Omega}(1+\partial_{x}\rho)\partial^{k+1}\partial_{y}\rho(\partial^{k+1}\partial_{y}u_{1}\partial_{x}\rho+\partial^{k+1}\partial_{y}u_{2}\partial_{y}\rho)\\\nonumber
&&-\int_{\Omega}(1+\partial_{x}\rho)\partial^{k+1}\partial_{x}\rho(\partial^{k+1}\partial_{x}u_{1}\partial_{x}\rho+\partial^{k+1}\partial_{x}u_{2}\partial_{y}\rho)\lesssim E_4^2\Gamma_{k+1}^2
\end{eqnarray}

From \eqref{3.58}-\eqref{3.59} we get the bound  of $I_{8}\lesssim (E_4+E_4^2)(E_k^2+\Gamma_{k+1}^2)$.
Taking \eqref{3.39}-\eqref{3.59} as a whole,   we get
\begin{eqnarray}
&&\frac{d}{dt} \Gamma_{k+1}^2+\kappa\int_{\Omega}(1+\partial_{x}\rho)|\partial^{k+1}u|^2dxdy\\\nonumber
&&\lesssim(E_k^2+\Gamma_{k+1}^2)(E_4+E_4^2+\|\bar{\Phi}\|_{H^{k+3}}+\|\Psi\|_{H^{k+3}}).
\end{eqnarray}

\end{proof}

\section{Decay estimates}
In this section, our goal is to get the  decay rate in time of the perturbation $(u,\rho)$.
\subsection{Linearized problem}
At the beginning, our system \eqref{1.6}-\eqref{1.7} is rewritten as
\begin{equation}\label{4.1}
\begin{cases}
\partial_{t}u_{1}+\kappa u_{1}+C_0\partial_{y}^{2}\rho=F_{1}-\partial_{x}P,\\
\partial_{t}u_{2}+\kappa u_{2}-C_0\partial_{xy}\rho=F_{2}-\partial_{y}P,\\
\partial_{t}\rho+C_0u_{1}=F_{3}.
\end{cases}
\end{equation}
Here, $F_{1}$, $F_{2}$, and $F_{3}$ are
\begin{equation}\label{4.2}
\begin{cases}
F_{1}=(\nabla^{\bot}\rho\cdot\nabla)\partial_{y}\rho-\partial_{x}\rho\partial_{y}^{2}\bar{\Phi}+\partial_{y}\bar{\Phi}\partial_{xy}\rho-\Psi\partial_{x}u_{1}-u_{2}\partial_{y}\Psi-(u\cdot\nabla)u_{1},\\
F_{2}=-(\nabla^{\bot}\rho\cdot\nabla)\partial_{x}\rho-\partial_{y}\bar{\Phi}\partial_{x}^{2}\rho-\Psi\partial_{x}u_{2}-(u\cdot\nabla)u_{2},\\
F_{3}=-u_{2}\partial_{y}\bar{\Phi}-(u\cdot\nabla)\rho-\bar{\Phi}\partial_{x}\rho.
\end{cases}
\end{equation}

Recalling the incompressible condition, $P$ is calculated as  \eqref{3.51}, the linearized equation of \eqref{4.1} is:
\begin{equation}\label{4.3}
\begin{cases}
\partial_{t}u_{1}+\kappa u_{1}=-C_0\partial_{y}^{2}\rho,\\
\partial_{t}u_{2}+\kappa u_{2}=C_0\partial_{xy}\rho,\\
\partial_{t}\rho+C_0u_{1}=0.
\end{cases}
\end{equation}

Decoupling the system \eqref{4.3}, we get
\begin{equation}\label{4.4}
\begin{cases}
\partial_{t}^{2}u_{1}+\kappa\partial_{t}u_{1}-C_0^{2}\partial_{y}^{2}u_1=0,\\
\partial_{t}^{2}u_{2}+\kappa\partial_{t}u_{2}-C_0^{2}\partial_{y}^2u_2=0,\\
\partial_{t}^{2}\rho+\kappa\partial_{t}\rho-C_0^{2}\partial_{y}^{2}\rho=0,
\end{cases}
\end{equation}
with the initial data $u|_{t=0}=(u_{10},u_{20})$ and  $\rho|_{t=0}=\rho_0$. Besides,   boundary conditions are
$u_2|_{y=0,1}=0$ and $\partial_y u_1|_{y=0,1}=0.$

Denoting $\mathscr{F}$ as  the Fourier transform, and noting   boundary conditions, we write
\begin{equation}\label{4.5}
(\mathcal{F}_{b_q}(u_{10}),\mathcal{F}_{a_q}(u_{20}),\mathcal{F}_{b_q}(\rho_0))(\xi,q)
=\mathscr{F}(u_{10},u_{20},\rho_0), \quad (\xi,q)\in \mathbb{R}\times (\mathbb{N}\cup \{0\}).
\end{equation}

From the system \eqref{4.3}, we have
\begin{eqnarray}\label{4.6}
&&\mathcal{F}_{b_q}(\partial_{t}u_{1}|_{t=0})(\xi,q)=-\kappa\mathcal{F}_{b_q}(u_{10})(\xi,q)+\frac{C_0q^2\pi^2}{4}\mathcal{F}_{b_q}(\rho_0)(\xi,q),\\\nonumber
&&\mathcal{F}_{a_q}(\partial_{t}u_{2}|_{t=0})(\xi,q)=-\kappa\mathcal{F}_{a_q}(u_{20})(\xi,q)-C_0\frac{q\xi\pi}{2}\mathcal{F}_{b_q}(\rho_0)(\xi,q),\\\nonumber
&&\mathcal{F}_{b_q}(\partial_{t}\rho|_{t=0})(\xi,q)=-C_0\mathcal{F}_{b_q}(u_{10})(\xi,q).
\end{eqnarray}

By applying the Fourier transform  $\mathscr{F}$ to \eqref{4.4} and noting   initial conditions \eqref{4.5}-\eqref{4.6}, it is sufficient to study the following ODE
\begin{equation}\label{4.7}
\frac{d^2}{dt^2}\widehat{\Upsilon}(\xi,q,t)+\kappa\frac{d}{dt}\widehat{\Upsilon}(\xi,q,t)+\frac{C_0^{2}\pi^2}4q^{2}\widehat{\Upsilon}(\xi,q,t)=0, \quad q\in \mathbb{N}\cup \{0\}.
\end{equation}

$\widehat{\Upsilon}$ stands for the Fourier transform of $\Upsilon$. Write
\begin{equation}\label{4.8}
\delta(\kappa,C_0,q)=\kappa^2-C_0^{2}q^2\pi^2,
\end{equation}
the solution of \eqref{4.7} is given by
\begin{equation}\label{4.9}
\widehat{\Upsilon}(\xi,q,t)=
\begin{cases}
\frac{\widehat{\Upsilon}^{\prime}(\xi,q,0)-\phi_{-}\widehat{\Upsilon}(\xi,q,0)}{\sqrt{\delta}}e^{\phi_{+}t}
+\frac{\phi_{+}\widehat{\Upsilon}(\xi,q,0)
-\widehat{\Upsilon}^{\prime}(\xi,q,t)}{\sqrt{\delta}}e^{\phi_{-}t},\quad\delta\neq0,\\
\widehat{\Upsilon}(\xi,q,0)e^{-\frac{\kappa}{2}t}+[\frac{\kappa}{2}\widehat{\Upsilon}(\xi,q,0)+\widehat{\Upsilon}^{\prime}(\xi,q,0)]
te^{-\frac{\kappa}{2}t},\quad\delta=0,
\end{cases}
\end{equation}
where $\widehat{\Upsilon}^{\prime}$ is $\frac{d}{dt}\widehat{\Upsilon}$,  and
$
\phi_{\pm}(q)=\frac{-\kappa\pm\sqrt{\delta}}{2}.
$


Therefore,  solutions to \eqref{4.3} with initial data \eqref{4.5}-\eqref{4.6}, we have,
\begin{eqnarray}\label{4.10}
\mathcal{F}_{b_q}(u_{1}(t))(\xi,q)&=&\frac{C_0(q\frac{\pi}{2})^2\mathcal{F}_{b_q}(\rho_0)+\phi_{+}\mathcal{F}_{b_q}(u_{10})}{\sqrt{\delta}}e^{\phi_{+}t}
\\\nonumber
&&
-\frac{C_0(q\frac{\pi}{2})^2\mathcal{F}_{b_q}(\rho_0)+\phi_{-}\mathcal{F}_{b_q}(u_{10})}{\sqrt{\delta}}e^{\phi_{-}t},
\end{eqnarray}
\begin{eqnarray}\label{4.11}
\mathcal{F}_{b_q}(\rho(t))(\xi,q))&=&\frac{-C_0\mathcal{F}_{b_q}(u_{10})-\phi_{-}\mathcal{F}_{b_q}(\rho_0)}{\sqrt{\delta}}e^{\phi_{+}t}
\\\nonumber
&&+\frac{\phi_{+}\mathcal{F}_{b_q}(\rho_0)
+C_0\mathcal{F}_{b_q}(u_{10})}{\sqrt{\delta}}e^{\phi_{-}t},
\end{eqnarray}
\begin{eqnarray}\label{4.12}
\mathcal{F}_{a_q}(u_{2}(t))(\xi,q)&=&\frac{\phi_{+}\mathcal{F}_{a_q}(u_{20})-C_0\xi{q}\frac{\pi}{2}\mathcal{F}_{b_q}(\rho_0)}{\sqrt{\delta}}e^{\phi_{+}t}
\\\nonumber
&&+\frac{C_0\xi{q}\frac{\pi}{2}\mathcal{F}_{b_q}(\rho_0)-\phi_{-}\mathcal{F}_{a_q}(u_{20})}{\sqrt{\delta}}e^{\phi_{-}t},
\end{eqnarray}
where we use the fact $-\kappa-\phi_{-}(\xi,q)=\phi_{+}$.

\subsection{Linear Decay.}

From \eqref{4.8} and \eqref{4.10}-\eqref{4.12},  we know that the linearized  solution of \eqref{4.3} does not decay when  $\delta=\kappa^2$, which is equivalent to
the case $q=0.$  Due to the initial  conditions
\begin{equation}\label{4.13}
\int_0^1u_{10}(\xi,y)dy=\int_0^1\rho_0 (\xi,y)dy=0,
\end{equation}
from \eqref{4.10}, we have
\begin{equation}\label{4.14}
\mathcal{F}_{b_q}(u_{1}(t))(\xi,0)=\mathcal{F}_{b_q}(u_{1}(t))(\xi,0)e^{-\kappa{t}}=0.
\end{equation}

Similarly, from \eqref{4.11} we have
\begin{eqnarray}\label{4.15}
&&\mathcal{F}_{b_q}(\rho(t))(\xi,0)=\frac{C_0}{\kappa}\mathcal{F}_{b_q}(u_{10})e^{-\kappa{t}}+\mathcal{F}_{b_q}(\rho_0(\xi,0)-\frac{C_0}{\kappa}\mathcal{F}_{b_q}(u_{10})\\\nonumber
&&=\frac{C_0}{\kappa}\mathcal{F}_{b_q}(u_{10})(\xi,0)(e^{-\kappa{t}}-1)+\mathcal{F}_{b_q}(\rho_0(\xi,0)=0,
\end{eqnarray}
as well as
\begin{equation}\label{4.16}
\mathcal{F}_{a_q}(u_{2}(t))(\xi,0)=\mathcal{F}_{a_q}(u_{20})(\xi,0)e^{-\kappa t}.
\end{equation}

From \eqref{4.14}-\eqref{4.16}, we conclude the exponential decay when   initial  conditions \eqref{4.13} satisfies for $q=0$.

When $q>0$, there exists a uniform constant $c_\kappa>0$
\begin{equation}\label{4.17}
-\frac{\kappa}{2}<\sup_{q\in\mathbb{N}}Re \phi_{+}=\sup_{q\in\mathbb{N}}Re\frac{-\kappa+\sqrt{\kappa^{2}-C_0^{2}q^2\pi^2}}{2}=\frac{-\kappa+\sqrt{\kappa^{2}-C_0^{2}\pi^2}}{2}\triangleq-c_\kappa<0.
\end{equation}

Taking  \eqref{4.13}-\eqref{4.17} as whole, we have the following decay estimates for the linearized system \eqref{4.3}.
\begin{lemma}\label{lemma6}
If $(\rho,u)$ is the solution of \eqref{4.3} with the condition  \eqref{1.7} and \eqref{4.13}, then we have
\begin{equation}
\begin{cases}
\|\rho\|_{H^{n}(\Omega)}(t)\leq e^{-c_\kappa t}[\|\rho_0\|_{H^{n}(\Omega)} +\|u_{10}\|_{H^{n}(\Omega)} ],\\
\|u_{1}\|_{H^{n}(\Omega)}(t)\leq e^{-c_\kappa t}[\|\rho_0\|_{H^{n}(\Omega)}+\|u_{10}\|_{H^{n}(\Omega)} ],\\
\|u_{2}\|_{H^{n}(\Omega)}(t)\leq e^{-c_\kappa t}[\|\rho_0\|_{H^{n}(\Omega)}+\|u_{20}\|_{H^{n}(\Omega)}].
\end{cases}
\end{equation}
\end{lemma}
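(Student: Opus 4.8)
The plan is to exploit the explicit mode-by-mode representation \eqref{4.10}--\eqref{4.12}. First I would note that, by Plancherel in $x$ together with the fact that $\{a_q\}$ and $\{b_q\}$ are orthonormal bases of $L^2([0,1])$ adapted to the boundary conditions, the $H^n(\Omega)$ norm of each of $\rho$, $u_1$, $u_2$ is comparable to a weighted $\ell^2$ sum of the form $\int_{\mathbb R}\sum_{q\ge 0}\langle\xi,q\rangle^{2n}\,|\mathcal F_\bullet(\cdot)(\xi,q)|^2\,d\xi$, where $\langle\xi,q\rangle^2=1+\xi^2+q^2$. Hence it suffices to produce, for every frequency $(\xi,q)$, a bound of the shape $|\text{(solution mode)}(\xi,q,t)|\lesssim e^{-c_\kappa t}\,(\text{data modes})(\xi,q)$ with a constant uniform in $(\xi,q)$, and then sum against the Sobolev weights.

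The mode $q=0$, which is precisely the non-decaying mode of the ODE \eqref{4.7}, is handled separately. Here I would simply invoke the mean-free hypotheses \eqref{4.13}: as already computed in \eqref{4.14}--\eqref{4.16}, these force $\mathcal F_{b_q}(u_1)(\xi,0)\equiv0$ and $\mathcal F_{b_q}(\rho)(\xi,0)\equiv0$ for all $t$, while $\mathcal F_{a_q}(u_2)(\xi,0,t)=e^{-\kappa t}\,\mathcal F_{a_q}(u_{20})(\xi,0)$, which decays at rate $\kappa\ge c_\kappa$. So the $q=0$ contribution is already of the desired form.

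For $q\ge1$ I would rewrite the coefficients in \eqref{4.10}--\eqref{4.12} using $\phi_+-\phi_-=\sqrt\delta$ and $\phi_\pm=-\tfrac\kappa2\pm w$ with $w=\tfrac12\sqrt{\delta(\kappa,C_0,q)}$, so that the singular factors $1/\sqrt\delta$ recombine into divided differences and every mode becomes $e^{-\kappa t/2}$ times one of the entire functions (of the real variable $w^2$) $\tfrac{\sinh(wt)}{w}$, $\cosh(wt)$, or $\cosh(wt)\pm\tfrac\kappa2\tfrac{\sinh(wt)}{w}$. The key pointwise estimates are $|\cosh(wt)|\le e^{|\mathrm{Re}\,w|\,t}$ and $\bigl|\tfrac{\sinh(wt)}{w}\bigr|\le\min\{t,\,|w|^{-1}\}\,e^{|\mathrm{Re}\,w|\,t}$. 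By \eqref{4.17} one has $\mathrm{Re}\,\phi_+\le-c_\kappa$ for every $q\ge1$, i.e.\ $|\mathrm{Re}\,w|\le\tfrac\kappa2-c_\kappa$, hence $e^{-\kappa t/2}e^{|\mathrm{Re}\,w|t}\le e^{-c_\kappa t}$. For the finitely many ``subcritical'' indices $q$ (those with $\delta\ge0$) the variable $w$ stays in a fixed compact set, so the remaining polynomial and constant factors are uniformly controlled; for the infinitely many ``supercritical'' indices ($\delta<0$) the variable $w$ is purely imaginary with $|w|\gtrsim q$, and the bound $\bigl|\tfrac{\sinh(wt)}{w}\bigr|\le|w|^{-1}$ supplies exactly the gain needed to absorb the factors $C_0(q\pi/2)^2$ and $C_0\xi q\pi/2$ that stand in front of $\mathcal F_{b_q}(\rho_0)$ in \eqref{4.10} and \eqref{4.12}. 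Collecting these mode bounds and summing against $\langle\xi,q\rangle^{2n}$ then yields the three inequalities; the lone possibly-resonant index with $\delta=0$ contributes only a factor $t\,e^{-\kappa t/2}$, which is $\lesssim e^{-c_\kappa t}$ after shrinking $c_\kappa$ by an arbitrarily small amount (and is absent generically).

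The step I expect to be the main obstacle is precisely the uniform-in-$q$ control of the resolvent-type multipliers $1/\sqrt\delta$: individually each coefficient in \eqref{4.10}--\eqref{4.12} becomes singular as $\delta\to0$ and, for large $q$, carries powers of $q$ that threaten a loss of derivatives. The divided-difference reorganization described above is what reconciles both issues at once, turning the apparent singularities into bounded entire functions and converting the dangerous $q$-powers into the $|w|^{-1}$ decay that the coupled wave-type structure of \eqref{4.4} makes available. Everything else is a routine Plancherel summation.
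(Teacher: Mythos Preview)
Your approach is essentially the paper's: expand in the bases $\{a_q\},\{b_q\}$, eliminate the $q=0$ mode via \eqref{4.13}--\eqref{4.16}, and for $q\ge1$ invoke \eqref{4.17}. Your divided-difference reorganization into $\cosh(wt)$ and $\tfrac{\sinh(wt)}{w}$ is in fact more careful than anything the paper spells out. The paper offers no proof beyond the computations \eqref{4.10}--\eqref{4.17} preceding the lemma.

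There is, however, a concrete gap in your supercritical step. For large $q$ one has $|w|\sim C_0q\pi/2$, so the bound $\bigl|\tfrac{\sinh(wt)}{w}\bigr|\le|w|^{-1}$ cancels \emph{one} power of $q$, not two. Against the multiplier $C_0(q\pi/2)^2$ sitting in front of $\mathcal F_{b_q}(\rho_0)$ in \eqref{4.10} this leaves a residual factor $\sim q$; against $C_0\xi q\pi/2$ in \eqref{4.12} it leaves $\sim|\xi|$. The honest mode-level bounds are therefore
\[
|\mathcal F_{b_q}(u_1)(\xi,q,t)|\lesssim e^{-c_\kappa t}\bigl(|\mathcal F_{b_q}(u_{10})|+q\,|\mathcal F_{b_q}(\rho_0)|\bigr),
\]
\[
|\mathcal F_{a_q}(u_2)(\xi,q,t)|\lesssim e^{-c_\kappa t}\bigl(|\mathcal F_{a_q}(u_{20})|+|\xi|\,|\mathcal F_{b_q}(\rho_0)|\bigr),
\]
which after summation yield $\|u_1\|_{H^n}+\|u_2\|_{H^n}\lesssim e^{-c_\kappa t}\bigl(\|u_0\|_{H^n}+\|\rho_0\|_{H^{n+1}}\bigr)$, i.e.\ one derivative more on $\rho_0$ than the lemma claims. (Your bound for $\rho$ itself is fine: in \eqref{4.11} the multiplier on $u_{10}$ is $C_0/\sqrt\delta\sim q^{-1}$ and the multiplier on $\rho_0$ is $O(1)$.) This is not an artifact of the method but of the structure of \eqref{4.3}: its natural conserved/dissipated energy pairs $u$ at level $H^n$ with $\nabla\rho$ at level $H^n$, so the shift is inevitable.

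In short, the sentence ``supplies exactly the gain needed to absorb the factors $C_0(q\pi/2)^2$ and $C_0\xi q\pi/2$'' is not correct, and the lemma as literally stated cannot be reached by this (or any) route. The paper's formulation is imprecise on the same point; the reason nothing breaks later is that the only use of the lemma, via $E_4$ in \eqref{3.1} and Lemma~5.1, already measures $u$ in $H^4$ against $\rho$ in $H^5$, so the missing derivative is silently available. If you want a clean statement, replace $\|\rho_0\|_{H^n}$ by $\|\rho_0\|_{H^{n+1}}$ in the second and third lines.
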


\subsection{Non-Linear Decay.}
Next, by using the  Duhamel's principle, we denote
\begin{equation}
(w_{1}, w_{2}, w_{3})(t,\xi,q):=(\mathcal{F}_{b_q}u_{1},\mathcal{F}_{a_q}u_{2},\mathcal{F}_{b_q}\rho),
\end{equation}
and
\begin{equation}
(Q_{1},Q_{2},Q_{3})(t,\xi,q):=(\mathcal{F}_{b_q}F_{1},\mathcal{F}_{a_q}F_{2},\mathcal{F}_{b_q}F_{3}),
\end{equation}
with $F_{1},F_{2},F_{3}$ defined by \eqref{4.2}.

Our system \eqref{4.1} can be rewritten as:
\begin{equation}\label{4.21}
w^{\prime}(t)=A\cdot\,w(t)+Q(t),
 \end{equation}
with
\begin{equation}
 w(t)= \begin{pmatrix}
   w_{1} \\
  w_{2}\\
   w_{3} \\
  \end{pmatrix}
  , A=\left (
 \begin{matrix}
   -\kappa & 0 & C_0\frac{q^2\pi^2}{4} \\
   0& -\kappa &-C_0\xi{q}\frac{\pi}{2}\\
   -C_0 & 0 & 0
  \end{matrix}
  \right)
 ,Q(t)=\begin{pmatrix}
   Q_{1}\\
  Q_{2}\\
   Q_{3}
  \end{pmatrix}.
 \end{equation}

The solution of system \eqref{4.4} should  be
\begin{equation}
w(t)=e^{At}w(0)+\int_{0}^{t}e^{A(t-s)}Q(s)ds.
\end{equation}

Moreover, we get eigenvalues of the matrix $A$ by letting
\begin{equation}
|\lambda I-A|=
\begin{vmatrix}
\lambda+k&0& -C_0\frac{q^2\pi^2}{4}\\
0&\lambda+k&C_0\xi{q}\frac{\pi}{2}\\
C_0&0&\lambda\\
\end{vmatrix}=0,
\end{equation}
from which we get the eigenvalue of $A$ as follows
\begin{equation}
\lambda_{\pm}=\frac{-\kappa\pm\sqrt{\kappa^{2}-C_0^{2}q^2\pi^2}}{2}  ,\qquad \qquad \lambda=-\kappa.
\end{equation}

And then we  take the eigenvectors as
\begin{equation}
\begin{cases}
v_{+}=(\frac{C_0(q\frac{\pi}{2})^{2}}{-\lambda_{-}},\frac{C_0\xi{q}\frac{\pi}{2}}{\lambda_{-}},1)^T,\\
v_{-}=(\frac{C_0(q\frac{\pi}{2})^{2}}{-\lambda_{+}},\frac{C_0\xi{q}\frac{\pi}{2}}{\lambda_{+}},1)^T,\\
v=(0,1,0)^T.
\end{cases}
\end{equation}

As proved in the linear system part, different results will appear when $\delta$  vanish or not.
We shall discuss the decay rate in the following two cases:

\textbf{Case 1}(  $\delta\neq 0$):
Denote
\begin{equation}
S=(v_{+}, v_{-}, v)=\left (
 \begin{matrix}
\frac{C_0(q\frac{\pi}{2})^{2}}{-\lambda_{-}}& \frac{C_0(q\frac{\pi}{2})^{2}}{-\lambda_{+}} & 0 \\
\frac{C_0\xi{q}\frac{\pi}{2}}{\lambda_{-}}& \frac{C_0\xi{q}\frac{\pi}{2}}{\lambda_{+}} & 1\\
1&1& 0
  \end{matrix}
  \right),
\end{equation}
  and
\begin{equation}
D=\left (
 \begin{matrix}
   \lambda_{+}(\xi,q)& 0 &0 \\
   0&\lambda_{-}(\xi,q) &0\\
   0 & 0 & \lambda
  \end{matrix}
  \right),
\end{equation}
 there holds $A=S DS^{-1}$, with $S^{-1}$  given by:
\begin{equation}
S^{-1}=\left (
 \begin{matrix}
\frac{\lambda_{-}\lambda_{+}}{C_0(q\frac{\pi}{2})^{2}(-\sqrt{\delta})}&0&\frac{-\lambda_{-}}{\sqrt{\delta}} \\
\frac{\lambda_{-}\lambda_{+}}{C_0(q\frac{\pi}{2})^{2}(\sqrt{\delta})}&0& \frac{\lambda_{+}}{\sqrt{\delta}}\\
\frac{\xi{q}\frac{\pi}{2}}{(q\frac{\pi}{2})^{2}}&1& 0
  \end{matrix}
  \right).
\end{equation}

Then we have
\begin{equation}
\begin{aligned}
&e^{At}=S\left (
 \begin{matrix}
e^{\lambda_{+}t}&0&0\\
0&e^{\lambda_{-}t}&0\\
0&0&e^{\lambda t}
  \end{matrix}
  \right)S^{-1}\\
&=\left (
 \begin{matrix}
e^{\lambda_{+}t}\frac{\lambda_{+}}{\sqrt{\delta}}-e^{\lambda_{-}t}\frac{\lambda_{-}}{\sqrt{\delta}}&0&\frac{C_0(q\frac{\pi}{2})^{2}}{\sqrt{\delta}}(e^{\lambda_{+}t}-e^{\lambda_{-}t})\\
e^{\lambda_{+}t}\frac{-\xi\lambda_{+}}{q\frac{\pi}{2}\sqrt{\delta}}+e^{\lambda_{-}t}\frac{\xi\lambda_{-}}{q\frac{\pi}{2}\sqrt{\delta}}+e^{\lambda{t}}\frac{\xi}{q\frac{\pi}{2}}&e^{\lambda t}&-\frac{C_0\xi{q}\frac{\pi}{2}}{\sqrt{\delta}}(e^{\lambda_{+}t}-e^{\lambda_{-}t})\\
\frac{\lambda_{-}\lambda_{+}}{C_0(q\frac{\pi}{2})^{2}(-\sqrt{\delta})}(e^{\lambda_{+}t}-e^{\lambda_{-}t})&0&\frac{-\lambda_{-}}{\sqrt{\delta}}e^{\lambda_{+}}+\frac{\lambda_{+}}{\sqrt{\delta}}e^{\lambda_{-}}
  \end{matrix}
  \right),
\end{aligned}
\end{equation}
which implies the following decay estimates
\begin{equation}\label{4.31}
\begin{aligned}
\mathcal{F}_{b_q}(u_{1}(t))&\approx\,e^{\lambda_{+}t}(\frac{\lambda_{+}}{\sqrt{\delta}}\mathcal{F}_{b_q}(u_{10})+\frac{C_0(q\frac{\pi}{2})^{2}}{\sqrt{\delta}}\mathcal{F}_{b_q}(\rho_0))\\
&+\frac{\lambda_{+}}{\sqrt{\delta}}\int_{0}^{t}e^{\lambda_{+}(t-s)}\mathcal{F}_{b_q}[F_{1}(s)]ds+\frac{C_0(q\frac{\pi}{2})^{2}}{\sqrt{\delta}}\int_{0}^{t}
e^{\lambda_{+}(t-s)}\mathcal{F}_{a_q}[F_{3}(s)]ds,\\
\mathcal{F}_{a_q}(u_{2}(t))&\approx\,e^{\lambda_{+}t}(\frac{-\xi}{q\frac{\pi}{2}}\frac{\lambda_{+}}{\sqrt{\delta}}\mathcal{F}_{b_q}(u_{10})-\frac{C_0\xi{q}\frac{\pi}{2}}{\sqrt{\delta}}\mathcal{F}_{b_q}(\rho_0))\\
&+\frac{-\xi}{q\frac{\pi}{2}}\frac{\lambda_{+}}{\sqrt{\delta}}\int_{0}^{t}e^{\lambda_{+}(t-s)}
\mathcal{F}_{b_q}[F_{1}(s)]ds-\frac{C_0\xi{q}\frac{\pi}{2}}{\sqrt{\delta}}\int_{0}^{t}e^{\lambda_{+}(t-s)}\mathcal{F}_{b_q}[F_{3}(s)]ds,\\
\mathcal{F}_{a_q}(\rho(t))&\approx\,e^{\lambda_{+}t}(\frac{\lambda_{-}\lambda_{+}}{C_0(q\frac{\pi}{2})^{2}(-\sqrt{\delta})}\mathcal{F}_{b_q}(u_{10})-\frac{\lambda_{-}}{\sqrt{\delta}}\mathcal{F}_{a_q}(u_{20}))\\
&-\frac{\lambda_{-}\lambda_{+}}{C_0(q\frac{\pi}{2})^{2}(\sqrt{\delta})}\int_{0}^{t}e^{\lambda_{+}(t-s)}\mathcal{F}_{b_q}[F_{1}(s)]ds
-\frac{\lambda_{-}}{\sqrt{\delta}}\int_{0}^{t}e^{\lambda_{+}(t-s)}\mathcal{F}_{a_q}[F_{3}(s)]ds.
\end{aligned}
\end{equation}

\textbf{Case 2} ($\delta=0$). In the case, $\lambda_{+}=\lambda_{-}=-\frac{\kappa}{2}$, taking
\begin{equation}\label{4.32}
S_1=\left(\begin{matrix}\frac{2C_0(q\frac{\pi}{2})^{2}}{\kappa}&\frac{(2\kappa-4)C_0(q\frac{\pi}{2})^{2}}
{\kappa^{2}}&0\\-\frac{2C_0\xi{q}\frac{\pi}{2}}{\kappa}&\frac{(4-2\kappa)C_0\xi{q}\frac{\pi}{2}}{\kappa^{2}}&1\\1&1&0\end{matrix}\right),
\end{equation}
and
\begin{equation}\label{4.33}
S_1^{-1}=\left(\begin{matrix} \frac{\kappa^{2}}{4C_0(q\frac{\pi}{2})^{2}} & 0& 1-\frac{\kappa}{2}\\ -\frac{\kappa^{2}}{4C_0(q\frac{\pi}{2})^2}& 0 & \frac{\kappa}{2} \\ \frac{\xi{q}\frac{\pi}{2}}{(q\frac{\pi}{2})^{2}} & 1 & 0 \end{matrix} \right),
\end{equation}
then we get $A=S_1JS_1^{-1}$  with the Jordan matrix J as
\begin{equation}\label{4.34}
J=\left(\begin{matrix} -\frac{\kappa}{2} & 1 & 0\\ 0 & -\frac{\kappa}{2} & 0 \\ 0 & 0 & -\kappa \end{matrix} \right).
\end{equation}

Now we write
\begin{equation}\label{4.35}
J=\left(\begin{matrix} -\frac{\kappa}{2} & 0 & 0\\ 0 & -\frac{\kappa}{2} & 0 \\ 0 & 0 & -\kappa \end{matrix} \right)
+\begin{pmatrix} 0 & 1 & 0\\ 0 &0& 0 \\ 0 & 0 &0 \end{pmatrix}=D+N,
\end{equation}
hence, we get
\begin{equation}\label{4.36}
e^{At}=e^{-\frac{\kappa}{2}t}\begin{pmatrix}1-\frac{\kappa}{2}t & 0 &C_0(q\frac{\pi}{2})^{2}t\\
\frac{-\kappa\xi}{2q\frac{\pi}{2}}t-\frac{\xi}{q\frac{\pi}{2}}(1-e^{-\frac{\kappa}{2}t})&e^{-\frac{\kappa}{2}t}&-C_0\xi{q}\frac{\pi}{2}t\\ -\frac{\kappa^{2}}{4C_0(q\frac{\pi}{2})^{2}}t & 0 &\frac{\kappa }{2}t+1 \end{pmatrix}.
\end{equation}

From \eqref{4.31} and \eqref{4.36}, we have:
\begin{equation}\label{4.37}
\begin{aligned}
\mathcal{F}_{b_q}(u_{1}(t))&\lesssim e^{-c_kt}[\frac{\lambda_{+}}{\sqrt{\delta}}\mathcal{F}_{b_q}(u_{10})+\frac{C_0(q\frac{\pi}{2})^{2}}{\sqrt{\delta}}\mathcal{F}_{b_q}(\rho_0)]\\
&+\int_{0}^{t}e^{-c_k(t-s)}\mathcal{F}_{b_q}[F_{1}(s)]ds+\int_{0}^{t}e^{-c_k(t-s)}\mathcal{F}_{a_q}[F_{3}(s)]ds,\\
\mathcal{F}_{a_q}(u_{2}(t))&\lesssim e^{-c_kt}[\frac{-\xi}{q\frac{\pi}{2}}\frac{\lambda_{+}}{\sqrt{\delta}}\mathcal{F}_{b_q}(u_{10})-\frac{C_0\xi{q}\frac{\pi}{2}}{\sqrt{\delta}}\mathcal{F}_{b_q}(\rho_0)]\\
&+\int_{0}^{t}e^{-c_k(t-s)}\mathcal{F}_{b_q}[F_{1}(s)]ds-\int_{0}^{t}e^{-c_k(t-s)}\mathcal{F}_{b_q}[F_{3}(s)]ds,\\
\mathcal{F}_{a_q}(\rho(t))&\lesssim e^{-c_kt}[\frac{\lambda_{-}\lambda_{+}}{C_0(q\frac{\pi}{2})^{2}(-\sqrt{\delta})}\mathcal{F}_{b_q}(u_{10})-\frac{\lambda_{-}}{\sqrt{\delta}}\mathcal{F}_{a_q}(u_{20})]\\
&+\int_{0}^{t}e^{-c_k(t-s)}\mathcal{F}_{b_q}[F_{1}(s)]ds-\int_{0}^{t}e^{-c_k(t-s)}\mathcal{F}_{a_q}[F_{3}(s)]ds.
\end{aligned}
\end{equation}

Furthermore, similar to the process of \eqref{4.31}-\eqref{4.36}, we also have
\begin{equation}\label{4.38}
\begin{aligned}
\mathcal{F}_{b_q}(\partial_tu_{1}(t))&\lesssim e^{-c_kt}[\frac{\lambda_{+}}{\sqrt{\delta}}\mathcal{F}_{b_q}(u_{10})+\frac{C_0(q\frac{\pi}{2})^{2}}{\sqrt{\delta}}\mathcal{F}_{b_q}(\rho_0)]+\mathcal{F}_{b_q}[F_{1}(t)]\\
&+\int_{0}^{t}e^{-c_k(t-s)}\mathcal{F}_{b_q}[F_{1}(s)]ds+\int_{0}^{t}e^{-c_k(t-s)}\mathcal{F}_{a_q}[F_{3}(s)]ds+\mathcal{F}_{a_q}[F_{3}(t)],\\
\mathcal{F}_{a_q}(\partial_tu_{2}(t))&\lesssim e^{-c_kt}[\frac{-\xi}{(q\frac{\pi}{2})}\frac{\lambda_{+}}{\sqrt{\delta}}\mathcal{F}_{b_q}(u_{10})-\frac{C_0\xi{q}
\frac{\pi}{2}}{\sqrt{\delta}}\mathcal{F}_{b_q}(\rho_0)]+\mathcal{F}_{b_q}[F_{1}(t)]\\
&+\int_{0}^{t}e^{-c_k(t-s)}\mathcal{F}_{b_q}[F_{1}(s)]ds-\int_{0}^{t}e^{-c_k(t-s)}\mathcal{F}_{b_q}[F_{3}(s)]ds-\mathcal{F}_{a_q}[F_{3}(t)],\\
\mathcal{F}_{a_q}(\partial_t\rho(t))&\lesssim e^{-c_kt}[\frac{\lambda_{-}\lambda_{+}}{C_0(q\frac{\pi}{2})^{2}(-\sqrt{\delta})}\mathcal{F}_{b_q}(u_{10})
-\frac{\lambda_{-}}{\sqrt{\delta}}\mathcal{F}_{a_q}(u_{20})]+\mathcal{F}_{b_q}[F_{1}(t)]\\
&+\int_{0}^{t}e^{-c_k(t-s)}\mathcal{F}_{b_q}[F_{1}(s)]ds-\int_{0}^{t}e^{-c_k(t-s)}\mathcal{F}_{a_q}[F_{3}(s)]ds-\mathcal{F}_{a_q}[F_{3}(t)].
\end{aligned}
\end{equation}

\section{The proof of the main results}

Recalling the Proposition 3.2 and Proposition 3.3, it is sufficient to get the bound of $\int_0^TE_4dt$. We shall prove this bound by a bootstrap procedure.
For simplicity, when $k\geq 7$, we denote
\begin{equation}
\|\bar{\Phi}_0\|_{H^{k+3}}+\|\Psi_0\|_{H^{k+3}}\triangleq M_0,
\end{equation}
we have the following decay estimates:
\begin{lemma}
Assuming that $E_k^2+\Gamma_{k+1}^2\lesssim\varepsilon_0^2$, for  $t\in [0,T]$ and  $k\geq7$, then we have
\begin{equation}
E_4\leq 4M_0 e^{-\beta t},
\end{equation}
with $\beta=\min\{\alpha, \frac{c_\kappa}2\}$.
\end{lemma}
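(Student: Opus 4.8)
The plan is to run a continuity (bootstrap) argument directly on the low--order energy $E_4$, using the mild representations \eqref{4.37}--\eqref{4.38} of Section~4 rather than the energy identities of Section~3. The point of the hypothesis $k\ge 7$ is that the four norms entering $E_4$ only see $\le k-2$ derivatives, so in every product occurring in the forcings $F_1,F_2,F_3$ of \eqref{4.2} there is room to park the top derivatives on a factor controlled by $E_k$ (hence $\lesssim\varepsilon_0$ by assumption) and the remaining derivatives on a low--order factor. Concretely, set
\[
T^{\ast}:=\sup\bigl\{\,t\in[0,T]:\ E_4(s)\le 4M_0\,e^{-\beta s}\ \ \forall\,s\in[0,t]\,\bigr\}.
\]
From Proposition~3.1, \eqref{1.9}--\eqref{1.10}, and the fact that $\partial_t u|_{t=0}$, $\partial_t\rho|_{t=0}$ are expressed through the data and $F_i|_{t=0}$ via \eqref{4.1}, one gets $E_4(0)\le M_0$, so $T^{\ast}>0$; the goal is to improve the bootstrap bound to $E_4(t)\le 3M_0 e^{-\beta t}$ on $[0,T^{\ast}]$, which forces $T^{\ast}=T$ and proves the lemma.

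Taking $L^2$ and $\dot H^4$ norms in $(x,y)$ of the six identities \eqref{4.37}--\eqref{4.38}, applying Plancherel in $x$ together with the orthonormal $y$--bases $\{a_q\},\{b_q\}$ of Section~2.4, and summing the resulting mode--by--mode estimates, I would use \eqref{4.17} to dominate every exponential $e^{\lambda_{\pm}t}$ and every Duhamel kernel by $e^{-c_\kappa t}$ for $q\ge1$, and \eqref{4.13}--\eqref{4.16} to discard the $q=0$ modes of $u_1$ and $\rho$ --- here one must also check that the mean--zero conditions \eqref{1.9} are propagated, i.e.\ that the $q=0$ mode of $F_1,F_3$ vanishes, so the non--decaying eigenvalue $\lambda_+|_{q=0}=0$ is never excited. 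The outcome is an integral inequality of the form
\[
E_4(t)\ \lesssim\ e^{-c_\kappa t}E_4(0)\ +\ \int_0^t e^{-c_\kappa(t-s)}\,\mathcal{N}(s)\,ds\ +\ \mathcal{N}(t),
\]
where $\mathcal{N}(s)$ collects the Sobolev norms of $F_1,F_2,F_3$ at time $s$ at the orders dictated by which component of $E_4$ they feed (order $4$ for $u,\partial_t u$, order $5$ for $\rho,\partial_t\rho$); the non--convolved term $\mathcal{N}(t)$ comes from the $\mathcal{F}[F_i(t)]$ pieces in \eqref{4.38}, and the weights $\xi/(q\tfrac{\pi}{2})$ in the $u_2$--formulas only cost one extra $x$--derivative, which the $k\ge7$ budget absorbs.

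Next I would estimate $\mathcal{N}$ by Lemma~2.1 and a low--high split. Every monomial of $F_1,F_2,F_3$ is either (i) quadratic in the perturbation --- then, putting the top derivatives on the $E_k$--factor and keeping the other factor in a low norm $\le E_4$, it is $\lesssim\varepsilon_0 E_4(s)$ --- or (ii) of the form $(\text{background})\times(\text{perturbation})$ --- then, putting $\varepsilon_0$ on the perturbation factor and keeping the background in a norm controlled by the basic--flow decay estimates of Section~2.1 (whose hypotheses hold since $(\Psi_0,\Psi_1,\bar\Phi_0)\in H^{k+3}$), it is $\lesssim\varepsilon_0 M_0 e^{-\alpha s}$, so that the background's own decay is retained. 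Hence $\mathcal{N}(s)\lesssim\varepsilon_0\bigl(E_4(s)+M_0 e^{-\alpha s}\bigr)$, and feeding in the bootstrap bound together with $\alpha\ge\beta$ yields $\mathcal{N}(s)\lesssim\varepsilon_0 M_0 e^{-\beta s}$ (and likewise $\mathcal{N}(t)\lesssim\varepsilon_0 E_4(t)+\varepsilon_0 M_0 e^{-\beta t}$, the first term to be absorbed on the left below).

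To close, note that $\beta\le c_\kappa/2<c_\kappa$, so $\int_0^t e^{-c_\kappa(t-s)}e^{-\beta s}\,ds\le\tfrac{2}{c_\kappa}e^{-\beta t}$ with no polynomial loss; combining this with $e^{-c_\kappa t}E_4(0)\le M_0 e^{-\beta t}$ and with the bound on $\mathcal{N}$, the integral inequality becomes $E_4(t)\le(C_0+C_1\varepsilon_0)M_0 e^{-\beta t}+C_1\varepsilon_0 E_4(t)$, and absorbing $C_1\varepsilon_0 E_4(t)\le\tfrac12 E_4(t)$ for $\varepsilon_0$ small (the numerical constant $4$ being chosen to accommodate the implied constant from Lemma~\ref{lemma6}) gives $E_4(t)\le 3M_0 e^{-\beta t}$ on $[0,T^{\ast}]$, closing the bootstrap. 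The main obstacle is precisely the estimate of $\mathcal{N}$: one has to organize the bounds on $F_1,F_2,F_3$ so that the derivative loss in the magnetic--coupling terms is always absorbed by the $\varepsilon_0$--bounded high--order energy (this is where $k\ge7$ enters), while at the same time every monomial of the forcing genuinely decays at rate $\ge\beta$ --- which for the background--coupling terms is the Section~2.1 estimate, but for the purely quadratic terms is available only through the bootstrap hypothesis $E_4\le 4M_0 e^{-\beta s}$, so the argument is genuinely circular and closes only thanks to the smallness $\varepsilon_0$; a secondary subtlety is that the $q=0$ mode, stationary at the linear level, must be shown to stay switched off by \eqref{1.9}.
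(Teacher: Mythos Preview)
Your proposal is correct and follows essentially the same route as the paper: both arguments pass through the Duhamel/mild formulas \eqref{4.37}--\eqref{4.38}, invoke Plancherel with the $y$--bases of Section~2.4 to obtain the integral inequality $E_4(t)\lesssim e^{-c_\kappa t}E_4(0)+\int_0^t e^{-c_\kappa(t-s)}\|F(s)\|_{H^5}\,ds+\|F(t)\|_{H^5}$, estimate the forcing via a low--high split $\|F\|_{H^5}\lesssim E_6\bigl(E_4+\|\bar\Phi\|_{H^7}+\|\Psi\|_{H^6}\bigr)\lesssim\varepsilon_0(E_4+M_0e^{-\alpha t})$, and close a continuity argument using $\beta<c_\kappa$ and the smallness of $\varepsilon_0$. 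Your treatment is in fact more explicit than the paper's on two points the paper leaves implicit: the need to check that the $q=0$ modes of $F_1,F_3$ stay switched off (so the non--decaying eigenvalue is never excited), and the absorption of the instantaneous term $\mathcal{N}(t)\sim\varepsilon_0 E_4(t)$ into the left--hand side rather than directly invoking the bootstrap bound on it.
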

\begin{proof}
From \eqref{4.21}, \eqref{4.37} and \eqref{4.38}, by using the Plancherel's theorem,  we have
\begin{equation}
E_4(t)\lesssim e^{-c_kt}E_4(0)+\int_0^te^{-c_k(t-s)}\|F(\cdot,s)\|_{H^5}ds+\|F(\cdot,t)\|_{H^5}.
\end{equation}

Noting that $k\geq 7$ and $E_k^2+\Gamma_{k+1}^2\lesssim\varepsilon_0^2$,   we have
\begin{equation}
\|F(\cdot,t)\|_{H^5}\lesssim  E_6 E_4+E_6(\|\bar{\Phi}\|_{H^7}+\|\Psi\|_{H^6}).
\end{equation}
By using Proposition 2.2, there exists a  uniform constant $C_1$, such that
\begin{eqnarray}\label{5.5}
&&E_4(t)\leq C_1 e^{-c_kt}\epsilon_0^2+C_1\big(E_6 E_4+E_6(\|\bar{\Phi}\|_{H^7}+\|\Psi\|_{H^6})\big)\\\nonumber
&&+C_1\int_0^te^{-c_k(t-s)}(E_4E_k+E_k(\|\bar{\Phi}\|_{H^7}+\|\Psi\|_{H^6}))ds\\\nonumber
&&\leq C_1\bigg( e^{-c_kt}\epsilon_0^2+  \varepsilon_0 \big(E_4+M_0e^{-\alpha t}\big)+ \int_0^t \varepsilon_0 e^{-c_k(t-s)}(E_4+ M_0e^{-\alpha s})ds\bigg).
\end{eqnarray}

Noting $\beta=\min\{\alpha, \frac{c_\kappa}2\}$, by using the continuity procedure, we  now claim that
\begin{equation}\label{5.6}
E_4(t)\leq 4M_0 e^{-\beta t}.
\end{equation}

Actually,  from \eqref{5.5}, we have
\begin{eqnarray}\label{5.7}
&&\int_0^t \varepsilon_0 e^{-c_{\kappa}(t-s)}(E_4+ M_0e^{-\alpha s})ds\leq M_0\varepsilon_0e^{-c_{\kappa} t} \int_0^t   e^{ c_{\kappa}s}(4e^{-\beta s}+ e^{-\alpha s})ds
\\\nonumber
&&\lesssim M_0 \varepsilon_0 e^{-c_{\kappa}t}\big( e^{(c_\kappa-\beta) t}+e^{(c_\kappa-\alpha )t}\big)+ M_0 \varepsilon_0 e^{-c_{\kappa}t}
\leq C_2  M_0 \varepsilon_0 e^{-\beta t},
\end{eqnarray}
with $C_2$ be a uniform constant.

Then from \eqref{5.5} and \eqref{5.7}, we get
\begin{equation}\label{5.8}
E_4(t)\leq e^{-\beta t}\varepsilon_0\big( C_1\varepsilon_0+ 5C_1M_0 +C_1C_2 M_0\big).
\end{equation}

Noting that  $\varepsilon_0$ is  small enough, by taking  $\varepsilon_0\big( C_1\varepsilon_0+ 5C_1M_0 +C_1C_2 M_0\big)< 2M_0$,
which proves our claim \eqref{5.6}.
\end{proof}

{\bf Proof of our main theorem.}

From the Proposition 2.2,  Proposition 3.2 and Proposition 3.3, by using the Gronwall's inequality and Lemma 5.1, we have
\begin{eqnarray}\label{5.9}
&& E_k^2(t)+\Gamma_{k+1}^2(t)\\\nonumber
&&\lesssim (E_k(0)^2+\Gamma_{k+1}^2(0))e^{ \int_0^tE_4+\|\bar{\Phi}\|_{H^{k+3}} +\|\Psi\|_{H^{k+2}} +\|\partial_t{\Psi}\|_{H^{k+2}} ds }\lesssim \epsilon_0^2 e^{(\frac4{\beta}+\frac{1}{\alpha})M_0}.
\end{eqnarray}

Thus by \eqref{5.9}, we have
\begin{eqnarray}\label{5.10}
\|u\|_{H^k}^2+\|\partial_{t}u\|_{H^k}^2+\kappa\int_0^T\|u\|_{H^k}^2+\|\partial_{t}u\|_{H^k}^2ds\lesssim \varepsilon_0^{2}
\end{eqnarray}
from which we complete our proofs.

\section*{Acknowledgement}  Yi Du and Wang Yang were supported
by NSFC (grant No. 11471126; 11971199). Yi Zhou was supported by Key Laboratory of Mathematics for Nonlinear Sciences (Fudan University), Ministry of Education of China, P.R.China.
Shanghai Key Laboratory for Contemporary Applied Mathematics, School of Mathematical Sciences, Fudan University, P.R. China, NSFC (grants No. 11421061), 973 program (grant No. 2013CB834100) and 111 project.

\end{document}